\begin{document}

\newtheorem{definition}{Definition}[section]
\newtheorem{proposition}[definition]{Proposition}
\newtheorem{lemma}[definition]{Lemma}
\newtheorem{theorem}[definition]{Theorem}
\newtheorem{corollary}[definition]{Corollary}
\newtheorem{remark}[definition]{Remark}
\newtheorem*{remark*}{Remark}

\theoremstyle{definition}
\newtheorem*{myproof}{Proof}

\numberwithin{equation}{section}


\renewcommand{\a}{\alpha}
\newcommand{\Acal}{\mathcal{A}}
\newcommand{\ds}{\displaystyle}
\newcommand{\hF}{\hat{F}}
\newcommand{\ga}{\gamma}
\newcommand{\Ga}{\Gamma}
\newcommand{\hG}{\hat\Gamma}
\newcommand{\p}{\partial}
\newcommand{\pf}{\medskip \noindent {\sl Proof}. \ }
\newcommand{\Pcal}{\mathcal{P}}
\newcommand{\RR}{\mathbb{R}}
\newcommand{\ve}{\varepsilon}
\newcommand{\vp}{\varphi}
\newcommand{\tA}{\tilde A}
\newcommand{\tB}{\tilde B}


\title{Estimate of the Fundamental Solution for Parabolic Operators with Discontinuous Coefficients}
\author{Michele Di Cristo\thanks{Department of Mathematics, Polytechnic University of Milan, Milan 20133, Italy. This work was supported by JSPS Postdoctoral Fellowship for Foreign Resaerchers (PE 08039)}\and Kyoungsun Kim\thanks{Department of Mathematics, Ehwa Womans University, Seoul 120-750, Korea. This work was
supported by the Korea Research Foundation Grant funded by the
Korean Government(MOEHRD)(KRF-2006-214-C00007).}\and Gen
Nakamura\thanks{Department of Mathematics, Hokkaido University, Sapporo 080-061, Japan. This work was partially supported by Grant-in-Aid
for Scientific Research (B)(No. 19340028) of Japan Society for
Promotion of Science.}}
\date{}

\maketitle

\begin{center}\begin{minipage}[h]{.8\textwidth}\small {\bf Abstract.} We
will show that the same type of estimates known for the fundamental solutions for scalar parabolic equations with smooth enough coefficients hold for the first order derivatives of fundamental solution with respect to space variables of scalar parabolic equations of divergence form with discontinuous coefficients. The estimate is very important for many applications. For example, it is important for the inverse problem identifying inclusions inside a heat conductive medium from boundary measurements.

\medskip

{\bf Mathematics Subject Classification(2000)}: 35R30.
\end{minipage}\end{center}
\section{Introduction}\label{sec::introduction}
Let $\mathcal{L}$ be a parabolic operator of the form
\begin{equation}\label{eqn::operator}
\mathcal{L}=\partial_t - \nabla \cdot A \nabla
\end{equation}
with an $n\times n$ matrix $A=(a_{ij})\in L^{\infty}(D)$ and a
bounded domain $D\subset \mathbb{R}^n$ with boundary $\partial D$ of
Lipschitz class. $D$ has compactly embedded subdomain
$D_m\,(m=1,2,\cdots, L)$ with boundaries $\partial
D_m\,(m=1,2,\cdots,L)$ of $C^{1,\alpha}$ class such that $\overline
D_\ell\cap\overline D_m=\emptyset\,(\ell\not=m)$,
$\overline{D_m}\subset D$ $(1\leq m\leq L)$, where $0<\alpha<1$.
Moreover, there exists a constant $\delta>0$ such that
\begin{equation}\label{eqn::positivity}
\sum_{i,j=1}^{n} a_{ij}(x)\xi_i\xi_j\geq \delta \sum_{i=1}^n\xi_i^2\quad (\xi\in \mathbb{R}^n,\, \mbox{a.e.}\,x\in D).
\end{equation}
Also, we assume that $A\in C^{\mu}(\overline{D_m})$ ($1\leq m \leq L$) with $0<\mu<1$.

We want to show the following theorem which is quite important in
inverse problems for heat equations with discontinuous
coefficients.(See the remark after the following theorem.)

\begin{theorem}\label{estimate of gamma}
There exists a fundamental solution $\Gamma(x,t;y,s)$ of $\mathcal{L}$ with the estimates
\begin{equation}\label{estimate}
0 < \Gamma(x,t;y,s) \leq \frac{C}{(t-s)^{n/2}}e^{-\frac{|x-y|^2}{C(t-s)}},
\end{equation}
\begin{equation}\label{gradient estimate}|\nabla_x \Gamma(x,t;y,s)|\leq
\frac{C}{(t-s)^{(n+1)/2}}e^{-\frac{|x-y|^2}{C(t-s)}}
\end{equation}
for any $t,s\in{\Bbb R}$, $t>s$ and almost every $x,y\in D$, where
$C>0$ is a constant depending only on $A$ and $n$.
\end{theorem}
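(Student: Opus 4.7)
The plan is to establish the two estimates separately. The Gaussian upper bound (\ref{estimate}) is insensitive to the piecewise nature of $A$: after extending $A$ to all of $\mathbb{R}^n$ (say as the identity outside $D$) while preserving uniform ellipticity and $L^\infty$-boundedness, the existence of a nonnegative fundamental solution $\Gamma$ together with (\ref{estimate}) follows from the classical Nash--Aronson--Fabes--Stroock theory for divergence-form parabolic operators with merely bounded measurable coefficients. No piecewise regularity of $A$ is used in this first step.

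For the gradient bound (\ref{gradient estimate}), the strategy is to combine (\ref{estimate}) with a local parabolic gradient estimate tailored to coefficients that are $C^\mu$ on each closed piece, with $C^{1,\alpha}$ interfaces. Fix $(y,s)$ and $(x_0,t_0)$ with $t_0>s$, set $r:=\tfrac{1}{2}\sqrt{t_0-s}$, and consider the parabolic cylinder $Q_r:=B_r(x_0)\times(t_0-r^2,t_0]$. On $Q_r$ the function $u(x,t):=\Gamma(x,t;y,s)$ is a nonnegative weak solution of $\mathcal{L}u=0$, and (\ref{estimate}) supplies a bound of the form $\|u\|_{L^\infty(Q_r)}\le C r^{-n}\exp(-|x_0-y|^2/(C'(t_0-s)))$. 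The estimate (\ref{gradient estimate}) then reduces to the scale-invariant local bound
\[
|\nabla u(x_0,t_0)|\le \frac{C}{r}\,\|u\|_{L^\infty(Q_r)},
\]
since $r^{-(n+1)}$ is comparable to $(t-s)^{-(n+1)/2}$ up to a constant.

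To prove this local gradient bound I would adapt the Li--Vogelius/Li--Nirenberg piecewise-regularity theory for elliptic transmission problems to the parabolic setting. When $Q_r$ lies in a region where $A$ is $C^\mu$, ordinary interior parabolic Schauder estimates do the job. The substantive case is when $Q_r$ straddles an interface $\partial D_m$: one flattens the interface with a $C^{1,\alpha}$ diffeomorphism and, either by freezing time and applying the elliptic transmission gradient estimates slice by slice, controlling the time regularity through $\partial_t u=\nabla\cdot A\nabla u$, or via a direct parabolic Campanato/reflection argument, one shows that $u$ is piecewise $C^{1,\mu}$ in space with a quantitative gradient bound. A convenient device for making everything rigorous is to first mollify $A$ to obtain a smooth family $A_\ve$, prove (\ref{estimate}) and the local gradient bound for the associated fundamental solutions $\Gamma_\ve$ with constants depending only on the piecewise data of $A$, and then pass to the limit $\ve\to 0$ by compactness.

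The main obstacle is precisely producing this local parabolic gradient estimate with constants that are simultaneously scale-invariant and uniform in the smoothing parameter $\ve$, since naive bounds in terms of $\|\nabla A\|_{L^\infty}$ blow up as $\ve\to 0$ across the interfaces. The estimate must therefore depend only on the piecewise $C^\mu$-norm of $A$, the ellipticity constant $\delta$, and the $C^{1,\alpha}$ geometry of the $\partial D_m$; the $C^{1,\alpha}$ regularity of the interfaces is what allows the flattening step to produce transformed coefficients that are again piecewise Hölder, after which the transmission version of the parabolic Schauder estimate closes the argument.
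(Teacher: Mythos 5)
Your high-level plan matches the paper's: (\ref{estimate}) is taken directly from Aronson's theory for divergence-form parabolic operators with bounded measurable coefficients, and (\ref{gradient estimate}) is reduced to a scale-invariant local interior gradient estimate for solutions of $\mathcal{L}u=0$ with constants depending only on the piecewise H\"older data of $A$ and the $C^{1,\alpha}$ interfaces, proved by a parabolic version of the Li--Nirenberg transmission machinery. (Your $L^\infty\to\nabla L^\infty$ form, with $r\sim\sqrt{t_0-s}$, is equivalent to the paper's $L^2\to\nabla L^\infty$ form, Theorem~\ref{thm::main}, after one plugs in the Aronson bound; the paper massages this through Proposition~\ref{prop4.2} with the adaptive radius $\rho=\tfrac14[|x_0-\xi|^2+t_0-\tau]^{1/2}$, but the Gaussian bookkeeping works with your choice too.) Where the proposal stays vague is exactly where the paper does the work. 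You offer a menu --- flatten the interface, freeze time and use elliptic estimates slice-by-slice, a parabolic Campanato argument, or mollify and pass to the limit --- without committing, and two of these are genuinely problematic: freezing time ignores that $u$ has only distributional time regularity through $\partial_t u=\nabla\cdot A\nabla u$, and mollification-plus-compactness is circular unless you already have $\varepsilon$-uniform piecewise estimates, which is the claim being proved. The paper commits to the Campanato/iteration route and makes it quantitative by approximating $A$ with a piecewise-constant $\overline A$ across a single flat interface (Lemma~\ref{lem::4.3m}, Lemma~\ref{lem::3.1m}), then constructing and estimating the Green function of $\partial_t+\nabla\cdot\overline A\nabla$ in a two-layered cube with Dirichlet data explicitly, via Laplace transform in $t$ and Fourier transform in the tangential variables, with the pointwise kernel bounds coming from Arima's lemma (Appendices A and B), and finally running the geometric iteration adapted from Li--Nirenberg's Proposition~4.1. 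That explicit model-problem Green function is the ingredient that makes the piecewise-uniform constant possible, and it is not anticipated in your sketch; without it (or some substitute of the same strength), the ``main obstacle'' you identify in your last paragraph remains open.
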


\begin{remark}
${}$
\newline
(i) For the simplicity of notations, we confined our argument to
scalar parabolic operators of divergence form without zeroth order
term. However, our argument can be generalized not only to more
general scalar parabolic operators, but also to parabolic systems.
\newline
(ii) The estimate \eqref{estimate} is the well known estimate of the
fundamental solution (\cite{A}). We will call the estimate
\eqref{gradient estimate} {\rm gradient estimate}. This gradient
estimate is quite crucial for the dynamical probe method \cite{IKN}
and stability estimate for the inverse boundary value problem
\cite{D-V}. Here, the dynamical probe method is a reconstruction
scheme for the inverse boundary value problem identifying an unknown
discontinuities of a medium inside a known heat conductor from our
measurements on the boundary which are so called Dirichlet to
Neumann map or the Neumann to Dirichlet map. The graphs of these
maps are nothing but the set of infinitely many Cauchy data of the
solutions to the forward problem of this inverse problem. Also, the
stability estimate is the estimate of continuity of the map which
maps the set of Cauchy data to the unknown this continuity of media.
\end{remark}

We will show later that the gradient estimate of the fundamental
solution follows from the following interior estimate following the
argument given in \cite{D-V}.

\begin{theorem}[Main Theorem]\label{thm::main}
Let $0<r<T$, $\overline{r\Omega}\subset D$ and $u\in W(r\Omega\times
(-r^2,r^2))$; $u\in L^2((-r^2,r^2);H_0^1(r\Omega))$; $\partial_t u
\in L^2((-r^2,r^2);H^{-1}(r\Omega))$ be a solution of $(\partial_t -
\nabla \cdot A \nabla)u=0$ in $r\Omega \times (-r^2,r^2)$. Then,
there exists a constant $c>0$ such that for any $0<\rho <r/2$ and
$(x,t)\in (r-2\rho)\Omega \times (-r^2+4\rho^2,r^2)$, we have
\begin{equation}\label{eqn::es1}
\|\nabla_x u\|_{L^{\infty}(\rho\Omega(x)\times (-\rho^2+t,t))}\leq \frac{c}{\rho^{n/2+2}}\|u\|_{L^2(2\rho\Omega(x)\times (-4\rho^2+t,t))},
\end{equation}
where $\Omega(x):=\{y=(y_1,\cdots,y_n)\in \mathbb{R}^n;|x_i-y_i|<1\,(1\le i\le n)\}$ with $x=(x_1,\cdots,x_n)$ and $\Omega:=\Omega(0)$.
\end{theorem}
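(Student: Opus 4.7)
The strategy is to factor \eqref{eqn::es1} into two separate estimates composed at intermediate radii: a local-boundedness bound of $L^2$-to-$L^\infty$ type for $u$ itself, and a gradient bound of $L^\infty$-to-$L^\infty$ type passing from $u$ to $\nabla_x u$. The parabolic rescaling $(y,s)\mapsto (x+\rho y, t+\rho^2 s)$ reduces each step to the case $\rho=1$: the rescaled coefficients retain the ellipticity \eqref{eqn::positivity} and the piecewise $C^\mu$ structure, while the interfaces remain $C^{1,\alpha}$, so constants can be chosen uniformly in $\rho$.

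\noindent\textbf{Step 1 (local boundedness of $u$).} Since $A$ is merely bounded and uniformly elliptic, the classical Moser iteration for weak solutions of parabolic equations in divergence form yields
\[
\|u\|_{L^\infty(\rho\Omega(x)\times(t-\rho^2,t))}
\leq \frac{c}{\rho^{(n+2)/2}}\|u\|_{L^2(2\rho\Omega(x)\times(t-4\rho^2,t))}.
\]
The discontinuities of $A$ across $\partial D_m$ play no role here.

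\noindent\textbf{Step 2 (gradient bound from a sup-norm bound).} Writing $Q_\rho:=\rho\Omega(x)\times(t-\rho^2,t)$, I aim to prove
\[
\|\nabla_x u\|_{L^\infty(Q_{\rho/2})}\leq c\,\rho^{-1}\|u\|_{L^\infty(Q_\rho)}.
\]
If $Q_\rho$ lies entirely in one region where $A$ is $C^\mu$, this is the classical interior parabolic Schauder estimate. Otherwise $Q_\rho$ meets some interface $\partial D_m$; using the $C^{1,\alpha}$ regularity of the interface, a local change of variables flattens $\partial D_m$ and converts the equation into a parabolic transmission problem with piecewise Hölder coefficients on the two sides of a hyperplane, with continuity of $u$ and of the co-normal flux $A\nabla u\cdot\nu$ across it. Piecewise $C^{1,\mu'}$ regularity for such transmission problems — the parabolic counterpart of the elliptic results of Li--Nirenberg and Li--Vogelius, obtained by freezing coefficients and running a Campanato-type iteration simultaneously on each side of the flat interface — yields the desired gradient bound, uniformly with respect to the position and orientation of the interface inside $Q_\rho$.

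\noindent\textbf{Composition and main obstacle.} Applying Step 2 on $Q_{3\rho/2}$ and then Step 1 on $Q_{2\rho}$ gives
\[
\|\nabla_x u\|_{L^\infty(Q_\rho)} \leq c\rho^{-1}\|u\|_{L^\infty(Q_{3\rho/2})} \leq c\rho^{-1-(n+2)/2}\|u\|_{L^2(Q_{2\rho})}=c\rho^{-n/2-2}\|u\|_{L^2(Q_{2\rho})},
\]
which is \eqref{eqn::es1}. The principal difficulty is Step 2 when $Q_\rho$ crosses an interface: after $C^{1,\alpha}$ flattening, the transformed coefficients are only $C^{\min(\mu,\alpha)}$ on each side of a flat interface, and the transmission condition forces a simultaneous Campanato decay on both sides with constants independent of how $\partial D_m$ sits inside the cylinder. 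Controlling this two-sided iteration up to the interface — rather than the Moser step or the purely interior Schauder step — is the technical heart of the argument.
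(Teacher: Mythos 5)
Your high-level strategy is the same as the paper's: localize near a single interface, flatten by a $C^{1,\alpha}$ change of variables, compare to a ``frozen'' piecewise-constant coefficient $\overline A$, and run a two-sided Campanato-type iteration in the spirit of Li--Nirenberg. That much is right, and you correctly put your finger on the two-sided iteration as the technical heart. There are, however, two substantive issues with the way the plan is set up.

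First, your factorization through an $L^\infty$ bound on $u$ (Moser in Step~1, then a Schauder-type gradient bound in Step~2) is different from, and less economical than, the paper's route. The paper never passes through $\|u\|_{L^\infty}$; it proves a \emph{scale-by-scale $L^2$ comparison lemma} (Lemma~\ref{lem::3.1m}), iterates it on the nested cylinders $\sigma_k\Omega\times((1-\sigma_k^2)\tau-\sigma_k^2,\tau)$ to build a telescoping sum $\sum_j w_j$ of frozen-coefficient solutions with geometrically decaying $L^2$ norms, and converts to an $L^\infty$ gradient only at the end via interior estimates for the frozen operator. Since the Campanato iteration works directly from $L^2$ data, the Moser step is superfluous, and placing all of the difficulty inside a self-contained ``$L^\infty\!\to\!L^\infty$ gradient estimate'' makes Step~2 carry essentially the full weight of the theorem.

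Second --- and this is the genuine gap --- Step~2 is asserted as ``the parabolic counterpart of the elliptic results of Li--Nirenberg and Li--Vogelius,'' but that parabolic counterpart is precisely what this paper is proving; it cannot be invoked as an available regularity theorem. The crucial ingredient that you do not address is the \emph{quantitative model-problem estimate}: to run the iteration, one needs pointwise Gaussian-type bounds, including first-order spatial derivative bounds, for the Green function of the frozen operator $\partial_t-\nabla\cdot\overline A\nabla$ on the two-layered cube with Dirichlet boundary conditions, uniformly in the interface position. The paper constructs this Green function explicitly by Laplace transform in $t$, Fourier transform in $x'$, and solution of a one-dimensional transmission ODE across the flat interface (Appendix~A), and then proves the Aronson-type estimate \eqref{eqn::esGreen} by Arima's method (Appendix~B). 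That Gaussian bound is then fed into a Schur-test lemma (Lemma~\ref{lem::fourier}) to produce the kernel estimate that makes the comparison inequality \eqref{eqn::es4} work. Without this explicit construction and its estimates --- or an equivalent substitute --- the ``simultaneous Campanato decay on both sides'' you invoke cannot be started, because there is no a priori control of the frozen-coefficient approximant up to the interface; so the proposal, as written, identifies the right scheme but leaves the central analytic input unproved.
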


\begin{remark} In \cite{D-V} there is also a statement of the corresponding main theorem. Here we will give a proof of the main theorem based on the argument of \cite{L-N} for second order elliptic systems of divergence forms which could provide a proof of the main theorem even in the case the inclusions touch.
\end{remark}

By translation, rotation and scaling, it is enough to prove the following.

\begin{proposition}\label{prop}
Let $A$ as in Theorem \ref{thm::main}. Then, for any solution $u\in W(\Omega\times (-1,1))$ of $(\partial_t - \nabla \cdot A \nabla)u=0$ in $\Omega \times (-1,1)$, there exists a constant $c>0$ depending only on the bound of $A$ and $n$ such that
\begin{equation}\label{eqn::es2}
|\nabla_x u(0,\tau)|\leq c\|u\|_{L^2(\frac{1}{2}\Omega\times (\frac{3}{4}\tau-\frac{1}{4},\tau))}\quad (\tau \in (\frac{1}{3},1))
\end{equation}
whenever $\nabla_x u(0,\tau)$ exists.
\end{proposition}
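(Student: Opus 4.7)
The plan is to adapt the Li--Nirenberg scheme of \cite{L-N} for elliptic systems with piecewise H\"older coefficients to the parabolic setting, combining a flattening argument with a Campanato-type excess decay on parabolic cylinders. Throughout I write $Q_\rho(x,t):=\rho\Omega(x)\times(t-\rho^2,t)$ and $Q_\rho:=Q_\rho(0,\tau)$. Since the $\overline{D_m}$ are pairwise disjoint with $C^{1,\a}$ boundaries, there is $r_0>0$, depending only on the geometry, such that $\rho\Omega$ meets at most one $\p D_m$ for every $\rho\le r_0$. If $\mathrm{dist}(0,\bigcup_m \p D_m)\ge r_0$, then $A\in C^\mu(\overline{r_0\Omega})$ and classical interior parabolic Schauder/Moser theory yields the pointwise bound at once. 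Otherwise, I straighten the unique relevant component $\p D_m$ near its closest point to $0$ by a $C^{1,\a}$ diffeomorphism, reducing the analysis to an equation of the same form in a parabolic cylinder whose interface is $\{y_n=0\}$, with transformed coefficients $\tA$ of class $C^{\mu'}$ on each side, where $\mu':=\min(\mu,\a)$.

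The central building block is a reference estimate for the frozen transmission problem $\p_t V-\nabla\cdot A_0\nabla V=0$, where $A_0$ is piecewise constant across $\{y_n=0\}$ with values $A_0^\pm$. A partial Fourier transform in the tangential variables $y'$ coupled with a Laplace transform in $t$ reduces the equation to a constant-coefficient ODE in $y_n$, complemented by the natural transmission conditions $V^+=V^-$ and $e_n\cdot A_0^+\nabla V^+=e_n\cdot A_0^-\nabla V^-$ at $y_n=0$, and produces an explicit transmission fundamental solution. From it I extract, for any weak solution $V\in L^2(Q_r)$ and any $0<\theta\le 1/2$,
\begin{equation*}
\|\nabla V\|_{L^\infty(Q_{r/2})}\le c\,r^{-n/2-2}\|V\|_{L^2(Q_r)},\qquad \phi_V(\theta r)\le c\,\theta^{2\a}\,\phi_V(r),
\end{equation*}
where $\phi_V(r):=\inf_P r^{-n-2}\int_{Q_r}|\nabla V-P|^2$ and the infimum is over piecewise constant matrices $P$ compatible with the transmission conditions at $\{y_n=0\}$.

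For the general case I proceed by perturbation. On $Q_\rho$ with $\rho\le r_0/2$, freeze $\tA$ at the origin to $A_0^\pm:=\tA(0^\pm)$ and decompose $u=V+w$, where $V$ solves the frozen transmission problem with $V=u$ on the parabolic boundary of $Q_\rho$ and $w$ satisfies $\p_t w-\nabla\cdot A_0\nabla w=\nabla\cdot((\tA-A_0)\nabla u)$ with zero parabolic boundary data. Piecewise H\"older continuity yields $\|\tA-A_0\|_{L^\infty(Q_\rho)}\le C\rho^{\mu'}$, and a standard energy estimate gives $\|\nabla w\|_{L^2(Q_\rho)}\le C\rho^{\mu'}\|\nabla u\|_{L^2(Q_\rho)}$. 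Inserting $u=V+w$ into $\phi_u$, defined analogously to $\phi_V$, and using the decay of $\phi_V$ produces an iteration inequality of the form
\begin{equation*}
\phi_u(\theta\rho)\le c\,\theta^{2\a}\,\phi_u(\rho)+c\,\theta^{-n-2}\,\rho^{2\mu'}\,\|\nabla u\|_{L^2(Q_\rho)}^2.
\end{equation*}
Fixing $\theta$ small enough to absorb the first term, iterating down to arbitrarily small scales, and combining with the Caccioppoli inequality $\|\nabla u\|_{L^2(Q_\rho)}\le c\rho^{-1}\|u\|_{L^2(Q_{2\rho})}$ delivers a Morrey-type bound on $\nabla u$ on all small parabolic cylinders around $0$, hence the pointwise estimate $|\nabla u(0,\tau)|\le c\|u\|_{L^2(\frac12\Omega\times(\frac34\tau-\frac14,\tau))}$ at any Lebesgue point of $\nabla u$.

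The principal difficulty is the frozen transmission estimate of the second step. In the elliptic case \cite{L-N} one exploits a direct Poisson-kernel/reflection computation across the flat interface; the parabolic analogue requires a careful study of the transmission fundamental solution produced by the partial Fourier--Laplace transform and the extraction of the sharp $(1+\a)$-order oscillation decay of its gradient, with scaling uniform in the position of $0$ relative to $\{y_n=0\}$. This uniformity is what allows the Campanato iteration of the third step to descend to arbitrarily small scales, in particular handling the case $0\in\p D_m$, and it is what the authors are alluding to when they remark that their argument should continue to work even when inclusions touch.
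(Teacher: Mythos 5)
Your strategy is a genuine alternative to the paper's. The paper follows Li--Nirenberg's scheme almost verbatim: Lemma~\ref{lem::4.3m} packages the piecewise H\"older regularity of $A$ into smallness of $\Vert A-\overline A\Vert_{Y^{1+\a',p}}$ after rescaling, Lemma~\ref{lem::3.1m} produces a frozen-coefficient approximant $v$ with $\Vert u-v\Vert_{L^2}\lesssim\ve^{1/2}\Vert u\Vert_{L^2}+\Vert f\Vert_{L^2}$ via a representation by the two-layer Green function $G^*$ and the Schur-test Lemma~\ref{lem::fourier}, and the proof of \eqref{eqn::es2} then builds an explicit dyadic sequence $w_k$ solving $(\partial_t-\nabla\cdot\overline A\nabla)w_k=0$ on shrinking cylinders $\sigma_k\Omega\times(\dots,\tau)$ with geometric decay $(4.4)'$--$(4.5)'$, so that $u(0,\tau)=\sum_j w_j(0,\tau)$ and $|\nabla_x u(0,\tau)|\leq\sum_j|\nabla w_j(0,\tau)|\leq CM$. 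You instead propose a Campanato excess-decay iteration: flatten $\partial D_m$ by a $C^{1,\a}$ change of variables, prove excess decay $\phi_V(\theta r)\lesssim\theta^{2\a}\phi_V(r)$ for the flat transmission problem with frozen coefficients, then do an $A_0$-freezing perturbation with $\Vert \tA-A_0\Vert_{L^\infty(Q_\rho)}\lesssim\rho^{\mu'}$ and iterate. Both hinge on quantitative regularity for the piecewise-constant flat-interface comparison operator — which the paper establishes through the explicit construction of $G^*$ in Appendices A--B via the Fourier--Laplace transform and the Arima machinery. The paper's route sidesteps the explicit $C^{1,\a}$ flattening (the $Y^{1+\a',p}$ norm already encodes how a curved $C^{1,\a}$ interface deviates from a flat one after rescaling), and it gives the pointwise estimate by summing a telescoping series rather than by a Morrey-type embedding; your route is closer to classical Schauder-by-perturbation.

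Two loose ends in your sketch deserve flagging. First, the $C^{1,\a}$ flattening produces first-order drift terms (from the Jacobian acting on $\p_t$ and from $\nabla\cdot$) that take you out of the class $\p_t-\nabla\cdot\tA\nabla=0$; you would need to carry these as part of the divergence-form right-hand side and verify they are compatible with the $\rho^{\mu'}$ smallness. Second, your iteration inequality carries $\rho^{2\mu'}\Vert\nabla u\Vert_{L^2(Q_\rho)}^2$ on the right; closing it requires controlling $\Vert\nabla u\Vert_{L^2(Q_{\theta^k\rho_0})}^2$ at every scale — typically by bounding the optimal transmission-compatible affine comparison $P_k$ inductively — and this does not "deliver" a Morrey bound without that extra bookkeeping. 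These are standard but nontrivial; the paper avoids both by working at the level of $L^2$-norms of the solution itself and by exploiting the $Y$-norm scaling to make $\ve$ universally small.
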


The rest of this paper is organized as follows. In the next section we will give the proof of Proposition \ref{prop} by assuming the existence of a Green function in a two layered cube with Dirichlet boundary condition and its estimates. The proof of the gradient estimate of a fundamental solution is given in section 3. Finally, in Appendix, we give a construction and estimates of the Green function in the two layered cube with Dirichlet boundary condition.

\section{Proof of Proposition \ref{prop}}\label{sec::proof}
We will adapt the proof of Li-Nireberg's paper \cite{L-N}. To begin with, we note that Lemma 4.3 in \cite{L-N} also holds if $\|\cdot\|_{Y^{1+\alpha',2}}$ is replaced by $\|\cdot\|_{Y^{1+\alpha',p}}$ for any $p>1$. That is
\begin{lemma}[Lemma 4.3']\label{lem::4.3m}
For $0<\alpha'\leq \min\{\mu,\frac{\alpha}{p(\alpha+1)}\}$, there exists a constant $E>0$ depending only on $\|A\|_{C^{\alpha'}(D_m)}$ for these $D_m$'s which intersect with $\Omega$ such that
\begin{equation}\label{eqn::es3}
\|A-\overline{A}\|_{Y^{1+\alpha',p}}\leq E,
\end{equation}
$\overline{A}$ is defined as in \cite{L-N} for the very special case i.e. the two layered cube $\Omega$.
\end{lemma}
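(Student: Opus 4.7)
The plan is to follow the proof of Lemma 4.3 in \cite{L-N} essentially verbatim, replacing each $L^2$-based integration with the corresponding $L^p$-based one. First I would recall from \cite{L-N} the definition of the Campanato-type norm $\|\cdot\|_{Y^{1+\alpha',p}}$ in the two-layered cube setting, together with the construction of the reference matrix $\overline{A}$ as the piecewise constant (mean-value) approximation of $A$ on each of the two layers. This reduces the problem to estimating, in an $L^p$-averaged sense weighted by the distance to the (flat) interface, how far $A$ is from being constant on each layer.

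Since $A\in C^\mu(\overline{D_m})$ on each of the finitely many layers meeting $\Omega$, the pointwise estimate
\begin{equation*}
|A(x)-\overline{A}(x)|\le C\,\|A\|_{C^{\alpha'}(D_m)}\,|x-x_0|^{\alpha'}
\end{equation*}
is immediate for $\alpha'\le\mu$, where $x_0$ is the reference point defining $\overline{A}$ on the layer containing $x$. The next step is to insert this Hölder bound into the $\|\cdot\|_{Y^{1+\alpha',p}}$ norm and carry out the resulting integral in coordinates adapted to the interface. Raising the pointwise bound to the $p$-th power and combining with the volume element yields the $p$-dependent constraint $\alpha'\le\alpha/(p(\alpha+1))$, which balances the growth of the pointwise bound against the scaling of tubular neighborhoods of $\partial D_m$; here the factor $\alpha/(\alpha+1)$ records the $C^{1,\alpha}$ regularity of the interface in the same way as in \cite{L-N}. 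The remaining constraint $\alpha'\le\mu$ is what allows the Hölder bound on $A-\overline{A}$ to be invoked in the first place.

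The main obstacle is purely bookkeeping: one must verify that every place where the $p=2$ argument of \cite{L-N} invokes Cauchy--Schwarz or $L^2$ duality can be replaced by Hölder's inequality or $L^p$--$L^{p'}$ duality, and that at the end the constants depend only on $\|A\|_{C^{\alpha'}(D_m)}$ for those $D_m$ intersecting $\Omega$, together with the $C^{1,\alpha}$ data of $\partial D_m$ and on $n$, $p$. Since the Li--Nirenberg argument nowhere exploits the Hilbert-space structure of $L^2$, this substitution is routine and produces the desired bound $\|A-\overline{A}\|_{Y^{1+\alpha',p}}\le E$.
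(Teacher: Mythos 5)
Your proposal tracks the paper's own proof exactly: the paper's argument for this lemma is the single line ``this can be easily proved by checking the proof in [L-N],'' and your sketch of what that check entails (the piecewise-constant $\overline A$ with flattened interface, the H\"older bound of $A$ on each layer, and the $L^p$ rather than $L^2$ integration producing the constraint $\alpha' \le \alpha/(p(\alpha+1))$) is the same route. One minor caution: the pointwise estimate $|A(x)-\overline A(x)| \lesssim \|A\|_{C^{\alpha'}(D_m)}|x-x_0|^{\alpha'}$ only holds away from the thin region between $\partial D_m$ and its flattened image, where $A$ and $\overline A$ read values from different layers and their difference is $O(1)$; it is precisely that region, controlled by the tubular-neighborhood scaling you allude to, that is responsible for the $\alpha/(\alpha+1)$ factor.
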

\begin{proof}
This can be easily proved by checking the proof in \cite{L-N}.
\end{proof}

\begin{remark}
Hereafter, names of theorems in parentheses correspond to those of the theorem in \cite{L-N}. For example, Lemma \ref{lem::4.3m}(Lemma 4.3') correspond
to Lemma 4.3 in \cite{L-N}.
\end{remark}

Next we generalize Lemma 3.1 in \cite{L-N} in a special way to the parabolic operator \eqref{eqn::operator}. That is we have the following:
\begin{lemma}[Lemma 3.1']\label{lem::3.1m}
For $0<\varepsilon<1$, let the previous $A$ satisfy
\begin{equation}\label{eqn::es3m}
\|A-\overline{A}\|_{Y^{1+\alpha',p}}\leq \varepsilon,
\end{equation}
where $p$ will be specified in the proof. Here, we note that we have properly scaled $A$ so that by applying Lemma \ref{lem::4.3m} (Lemma 4.3'), \eqref{eqn::es3m} is satisfied. Then, for any $f\in L^2((-1,\tau);L^2(\Omega))$ and solution $u\in W(\Omega\times (-1,\tau))$ of $$(\partial_t - \nabla \cdot A \nabla)u=\nabla \cdot f\quad \mbox{in}\quad \Omega \times (-1,\tau),$$
there exists a solution $v\in W(\frac{3}{4}\Omega\times (\frac{7}{16}\tau-\frac{9}{16},\tau))$ of
$$(\partial_t - \nabla \cdot\overline A \nabla)v=0\quad \mbox{in}\quad \frac{3}{4}\Omega\times (\frac{7}{16}\tau-\frac{9}{16},\tau)$$
and $\frac{3}{4} <\sigma_0<1$ such that for any $0<\sigma \leq \sigma_0$
\begin{equation}\label{eqn::es4}
\|u-v\|_{L^2(\sigma\Omega\times((1-\sigma^2)\tau-\sigma^2,\tau))}\leq C\Big(\|f\|_{L^2(\Omega\times (-1,\tau))}+ \varepsilon^{1/2}\|u\|_{L^2(\Omega\times (-1,\tau))}\Big),
\end{equation}
where $C>0$ depends only on $n$ and $A$.
\end{lemma}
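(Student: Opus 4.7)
The plan is to adapt the Li--Nirenberg comparison argument from \cite{L-N} to the parabolic setting. First I would define $v$ as the unique weak solution of the frozen-coefficient Cauchy--Dirichlet problem
\begin{equation*}
(\partial_t - \nabla\cdot\overline{A}\nabla)v=0 \text{ in } \tfrac{3}{4}\Omega\times\bigl(\tfrac{7}{16}\tau-\tfrac{9}{16},\tau\bigr),\qquad v=u \text{ on the parabolic boundary.}
\end{equation*}
Existence is ensured by the Green function for the two-layered cube constructed in the Appendix (or, equivalently, by standard Galerkin theory together with the solvability for piecewise-constant divergence-form parabolic operators). Setting $w=u-v$, then $w$ has zero parabolic boundary data and satisfies
\begin{equation*}
(\partial_t - \nabla\cdot\overline{A}\nabla)w = \nabla\cdot f + \nabla\cdot\bigl((A-\overline{A})\nabla u\bigr).
\end{equation*}

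Next I would test this equation against $w$, integrate in space and time, and use the uniform ellipticity of $\overline{A}$ (inherited from \eqref{eqn::positivity}) to absorb $\|\nabla w\|_{L^2}^2$ into the left-hand side, producing the standard parabolic energy estimate
\begin{equation*}
\|w\|_{L^\infty(L^2)}^2+\|\nabla w\|_{L^2}^2 \leq C\bigl(\|f\|_{L^2}^2+\|(A-\overline{A})\nabla u\|_{L^2}^2\bigr).
\end{equation*}
In view of this, \eqref{eqn::es4} is reduced to proving the key estimate
\begin{equation*}
\|(A-\overline{A})\nabla u\|_{L^2(\Omega\times(-1,\tau))}^2 \leq C\,\varepsilon\,\|u\|_{L^2(\Omega\times(-1,\tau))}^2.
\end{equation*}

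For this key estimate I would combine three ingredients. First, a parabolic Caccioppoli inequality to control $\|\nabla u\|_{L^2}$ by $\|u\|_{L^2}$ on a slightly enlarged cylinder; the attendant geometric loss is exactly what forces the comparison cylinder in Lemma \ref{lem::3.1m} to be strictly smaller than $\Omega\times(-1,\tau)$. Second, a parabolic Meyers--Gehring type higher-integrability estimate, upgrading $\nabla u\in L^2$ to $\nabla u\in L^{p_0}$ for some $p_0>2$ depending only on $n$ and $\delta$. Third, the splitting $|A-\overline{A}|^2\le \|A-\overline{A}\|_{L^\infty}\,|A-\overline{A}|$; since $\|A-\overline{A}\|_{L^\infty}\le 2\|A\|_{L^\infty}$ is bounded a priori, only the linear factor of $|A-\overline{A}|$ needs to be controlled by $\varepsilon$. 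Applying H\"older's inequality to $\int |A-\overline{A}|\,|\nabla u|^2$ with exponents that couple the $L^q$-norm of $A-\overline{A}$ (embedded into $Y^{1+\alpha',p}$ and hence bounded by $\varepsilon$) with the $L^{p_0}$-norm of $\nabla u$ yields the required bound, where the exponent $p$ in the statement is chosen so that $1/q+2/p_0=1$.

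The main obstacle is the quantitative matching between $p$ and the Meyers exponent $p_0$: one must fix $p$ so that the H\"older exponents close up and so that the $L^\infty$-bound on $A-\overline{A}$ enters only linearly, producing precisely $\varepsilon^{1/2}$ rather than an unspecified power $\varepsilon^\theta$. A secondary technical point is verifying that the trace of $u$ on the parabolic boundary of the comparison cylinder is regular enough to serve as admissible Dirichlet data for the frozen-coefficient problem; this is also what dictates the particular geometric set-up in the lemma.
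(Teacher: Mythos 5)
Your overall framework — define $v$ as the unique weak solution of the frozen-coefficient problem with Dirichlet data taken from $u$ on a comparison cylinder, set $w=u-v$, and reduce to an $L^2$ estimate for $w$ — matches the paper, and you have the right equation for $w$. The two proofs diverge, however, in how $w$ is then estimated, and this is where your proposal has a genuine gap.

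The paper does not prove the key estimate by an energy argument on $w$. It represents $w$ through the Green function of the frozen operator built in the Appendix,
$w(x,t)=-\int\!\!\int\big\{(A-\overline A)\nabla u+f\big\}\cdot\nabla_y G^*(y,s;x,t)\,dy\,ds$,
and invokes the Gaussian gradient bound \eqref{eqn::esGreen} for $\nabla_y G^*$. It then applies the Schur/Young-type kernel Lemma \ref{lem::fourier} with kernel $(A-\overline A)\nabla_y G^*$ and $q=1$, $p_1=p_2=2$: one marginal $I_1$ is $O(1)$, and for the other marginal $I_2$ it uses H\"older with a \emph{fixed} exponent $p>n+2$, chosen precisely so that $|\nabla_y G^*|^{p^*}$ is time-integrable, yielding $I_2\lesssim\|A-\overline A\|_{L^p(\Omega)}$. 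The Schur lemma then gives $\|w\|_{L^2}\lesssim\|A-\overline A\|_{L^p}^{1/2}\|\nabla u\|_{L^2}+\|f\|_{L^2}$, after which a single Caccioppoli step bounds $\|\nabla u\|_{L^2}$ by $\|u\|_{L^2}+\|f\|_{L^2}$. No higher integrability of $\nabla u$ is ever required, and $p$ is an explicit threshold rather than an implicit Gehring exponent.

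Your route — energy estimate for $w$, then Caccioppoli plus a Meyers--Gehring upgrade $\nabla u\in L^{p_0}$, then H\"older against $\|A-\overline A\|_{L^q}$ — stalls at the Gehring step. A parabolic reverse-H\"older estimate for $(\partial_t-\nabla\cdot A\nabla)u=\nabla\cdot f$ only gives $\nabla u\in L^{p_0}_{\mathrm{loc}}$ if $f\in L^{p_0}$; the source term enters as $(\fint|f|^{p_0})^{1/p_0}$ on the right. Here $f$ is merely $L^2$, and indeed a source $\nabla\cdot f$ with $f\in L^2\setminus L^{p_0}$ can defeat any higher integrability of $\nabla u$. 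To repair this you would have to split $u=u_1+u_2$, with $u_2$ sourced by $\nabla\cdot f$ and zero parabolic data and handled by a pure energy estimate, and apply Gehring only to the homogeneous part $u_1$; you did not make this decomposition. The secondary issue you yourself flag — that $p_0$ is implicit, so $q$ and hence the exponent $p$ in the hypothesis must be reverse-engineered from the ellipticity ratio via Lemma \ref{lem::4.3m} — is real but manageable; the $f$-integrability mismatch is the hard gap.
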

\begin{proof}
It is enough to prove the estimate for the case that $\Omega$ is divided into two parts by the boundary $\partial D_m$ of one of $D_m\,(1\le m\le L)$ such that the center $0\in\partial D_m$ of $\Omega$ and $\Omega$ does not contain any portion of $\partial D_\ell\,(\ell\not=m,\,1\le\ell\le m)$.

Take a cutoff function $\zeta\in C_0^{\infty}(\Omega \times (-1,2\tau))$. By the definition of weak solution, we have
$$\int_{-1}^{\tau}\Big \{(\partial_t u,\varphi)+(A\nabla u,\nabla \varphi)\Big \}dt =-\int_{-1}^{\tau}(f,\nabla \varphi)dt$$
for any $\varphi \in H_0^1(\Omega)$.

If we take $\varphi=\zeta^2u$, then we have
$$\int_{-1}^{\tau}\Big \{(\partial_t u,\zeta^2u)+(A\nabla u,\nabla (\zeta^2u))\Big \}dt =-\int_{-1}^{\tau}(f,\nabla (\zeta^2u))dt.$$
Here, we have
$$
\begin{aligned}
\mbox{LHS}\geq& \frac{1}{2}\int_{\Omega} (\zeta^2u^2)(\cdot,\tau)dx+\int_{-1}^{\tau}\int_{\Omega}\zeta \zeta_tu^2dxdt\\
&+\delta \int_{-1}^{\tau}\int_{\Omega}\zeta^2|\nabla u|^2 dx dt-C\int_{-1}^{\tau}\int_{\Omega}(\zeta |\nabla u|)(u |\nabla \zeta|)dx dt,
\end{aligned}$$
where $C>0$ denotes any general constant. We further have
$$\mbox{LHS}\geq \frac{\delta}{2} \int_{-1}^{\tau}\int_{\Omega}\zeta^2|\nabla u|^2 dx d\tau-C(\delta)\int_{-1}^{\tau}\int_{\Omega}u^2dx d\tau,$$
where $C(\delta)>0$ denotes any general constant depending on $\delta$.

On the other hand, we have
$$
\begin{aligned}
\mbox{RHS}=& -\int_{-1}^{\tau}\int_{\Omega}(\zeta f \cdot (\zeta \nabla u)+uf \cdot \nabla \zeta^2)dxd\tau\\
&\leq \frac{\delta}{4} \int_{-1}^{\tau}\int_{\Omega}\zeta^2|\nabla u|^2 dx d\tau+C(\delta)\int_{-1}^{\tau}\int_{\Omega}(u^2+ |f|^2)dx d\tau,
\end{aligned}$$
Hence,
$$
\frac{\delta}{4} \int_{-1}^{\tau}\int_{\Omega}\zeta^2|\nabla u|^2 dx d\tau\leq C(\delta)\int_{-1}^{\tau}\int_{\Omega}(u^2+ |f|^2)dx d\tau.$$
By $|\nabla (\zeta u)|^2\leq 2\zeta^2|\nabla u|^2+2u^2|\nabla \zeta|^2$, we have
$$
\frac{\delta}{8} \int_{-1}^{\tau}\int_{\Omega}|\nabla (\zeta u)|^2 dx d\tau\leq C(\delta)\int_{-1}^{\tau}\int_{\Omega}(u^2+ |f|^2)dx d\tau.$$
Then, combining this with Poincare's inequality, we have
$$\|\zeta u\|_{L^2((-1,\tau);H^1(\Omega))}\leq C(\delta)\Big (\|u\|_{L^2(\Omega\times (-1,\tau))}+\|f\|_{L^2(\Omega\times (-1,\tau))}\Big ).$$
If we let $\zeta$ to satisfy $\zeta =1$ in a neighborhood of $\overline{\frac{4}{5}\Omega\times (\frac{9}{25}\tau-\frac{16}{25},\tau)}$, we have
\begin{equation}\label{eqn::es5}
\|u\|_{L^2( (\frac{9}{25}\tau-\frac{16}{25},\tau);H^1(\frac{4}{5}\Omega))}\leq C(\delta)\Big (\|u\|_{L^2(\Omega\times (-1,\tau))}+\|f\|_{L^2(\Omega\times (-1,\tau))}\Big ).
\end{equation}
By \eqref{eqn::es5} and the Fubini theorem, there exists $\frac{3}{4}\leq \sigma_0<1$ such that
$$\|u\|_{L^2(((1-\sigma_0^2)\tau-\sigma_0^2,\tau);H^1(\partial(\sigma_0\Omega)))}\leq C(\delta)\Big (\|u\|_{L^2(\Omega\times (-1,\tau))}+\|f\|_{L^2(\Omega\times (-1,\tau))}\Big )$$

Let $v\in W(\sigma \Omega \times (-1,\tau))$ be the solution to
$$\left \{\begin{aligned}
&(\partial_t - \nabla\cdot\overline{A}\nabla )v=0 &&\mbox{in}\quad \sigma_0\Omega \times ((1-\sigma_0^2)\tau-\sigma_0^2,\tau),\\
&v=u &&\mbox{on}\quad  \partial (\sigma_0 \Omega) \times ((1-\sigma_0^2)\tau-\sigma_0^2,\tau),\\
&v=u &&\mbox{on}\quad  \sigma_0 \Omega \times \{(1-\sigma_0^2)\tau-\sigma_0^2\}.
\end{aligned}\right..$$

Then, $w:=u-v$ satisfies
$$\left \{\begin{aligned}
&(\partial_t - \nabla\cdot\overline{A}\nabla )w=\nabla \cdot ((A-\overline{A})\nabla u)+\nabla \cdot f &&\mbox{in}\quad \sigma_0\Omega \times ((1-\sigma_0^2)\tau-\sigma_0^2,\tau),\\
&w=0 &&\mbox{on}\quad  \partial (\sigma_0 \Omega) \times ((1-\sigma_0^2)\tau-\sigma_0^2,\tau),\\
&w=0 &&\mbox{on}\quad  \sigma_0 \Omega \times \{(1-\sigma_0^2)\tau-\sigma_0^2\}.
\end{aligned}\right.$$
Further, let $G^*(x,t;y,s)$ be the Green function such that for $y\in \sigma\Omega$ and $s \in \mathbb{R}$. That is, $G^*(x,t;y,s)$ is a distribution which satisfies
$$\left \{\begin{aligned}
&(\partial_t +\nabla\cdot\overline{A}\nabla )G^*(x,t;y,s)=0 &&\mbox{in}\quad \sigma_0\Omega \times (-\infty, s),\\
&G^*(x,t;y,s)=0 &&\mbox{on}\quad  \partial (\sigma_0 \Omega) \times (-\infty,s),\\
&\lim_{t\uparrow
s}\int_{\sigma_0\Omega}G^*(x,t;y,s)\varphi(x)dx=\varphi(y)
&&(\varphi\in C_0^{\infty}(\sigma_0\Omega)).
\end{aligned}\right.$$
Later in the appendix, we will provide the construction of $G^*(x,t;y,s)$ and proof of its estimate:
\begin{equation}\label{eqn::esGreen}
|\partial_x^{\alpha}G^*(x,t;y,s)|\leq c_{\alpha}(s-t)^{-\frac{n+|\alpha|}{2}}e^{-\frac{|x-y|^2}{c(s-t)}}\quad (t,s\in \mathbb{R},\,t<s,\,\mbox{a.e.}\,x,y\in \sigma_0 \Omega).
\end{equation}
for any $\alpha\in \mathbb{Z}_+^n,\ |\alpha|\le1$ with some constant $c_{\alpha}>0$.

By the Green formula, we have
\begin{equation}\label{eqn::Greenf}
w(x,t)=-\int_{(1-\sigma_0^2)\tau-\sigma_0^2}^{\tau}\int_{\sigma_0\Omega}\Big\{((A-\overline{A})\nabla u)(y,s)+f(y,s)\Big\}\cdot \nabla_y G^*(y,s;x,t)dyds.
\end{equation}
for $x\in \sigma_0\Omega$, $t\in ((1-\sigma_0^2)\tau-\sigma_0^2,\tau)$.

To proceed further, we need the following Lemma \ref{lem::fourier} which is well-known in Fourier analysis.

\begin{lemma}\label{lem::fourier}
Let $(X_i,M_i,m_i)$ $(i=1,2)$ be $\sigma$ finite complete measure spaces and $(X_1\times X_2, M_1\otimes M_2, m_1\times m_2)$ be the product space with complete measure $m_1\times m_2$. Also, let $p_1$, $p_2$ and $q\in [1,\infty)$ satisfy $1/p_2+1/q=1/p_1+1$ and a measurable function $K(x_1,x_2)$ on $X_1\times X_2$ satisfy
$$\int_{X_1} |K(x_1,x_2)|^q m_1(dx_1)\leq L_1\quad (\mbox{a.e.}\,x_2 \in X_2),$$
$$\int_{X_2} |K(x_1,x_2)|^q m_2(dx_2)\leq L_2\quad (\mbox{a.e.}\,x_1 \in X_1).$$
Then, for any $f(x_2)\in L^{p_2}(X_2)$, we have
$$\|Kf\|_{ L^{p_1}(X_1)}\leq L_1^{1/p_1}L_2^{1-1/p_2}\Vert f\Vert_{L^{p_2}(X_2)},$$
where
$$(Kf)(x_1)=\int_{X_2}K(x_1,x_2)f(x_2)m_2(dx_2).$$
\end{lemma}

We apply this Lemma \ref{lem::fourier} to the kernel $(A-\overline{A})(y,s)\nabla_yG^*(y,s;x,t)$ ($(x,t),\,(y,s)\in \sigma_0 \Omega \times ((1-\sigma_0^2)\tau-\sigma_0^2,\tau),\,s<t$) by taking $q=1$, $p_1=p_2=2$ and
$X_1=X_2={\sigma}_0\Omega\times((1-{\sigma}_0^2)\tau-{\sigma}_0^2,\tau)$.

Let
$$I_1(y,s)=\int_{(1-{\sigma}_0^2)\tau-{\sigma}_0^2}^{\tau}\int_{{\sigma}_0\Omega}
\Big|(A-\overline{A})(y,s) \nabla_y G^*(y,s;x,t)\Big|dxdt,$$
and
$$I_2(x,t)=\int_{(1-\sigma^2)\tau-\sigma^2}^{\tau}\int_{\sigma_0\Omega}
\Big|(A-\overline{A})(y,s) \nabla_y G^*(y,s;x,t)\Big|dyds.$$ Since
$A$, $\overline{A}$ are bounded and
$\int_{\mathbb{R}^n}e^{-\frac{|x-y|^2}{c(t-s)}}dx=O((t-s)^{n/2})$
$(s<t)$,
$$I_1(y,s)\leq C\int_s^{\tau}(t-s)^{-1/2}dt =2C(\tau-s)^{1/2}\leq C.$$
Similarly, by taking $p>n+2$,
$$\begin{aligned}
I_2(x,t)\leq &\Bigg(\int_{(1-\sigma_0^2)\tau-\sigma_0^2}^{\tau}\int_{\sigma_0\Omega}
\Big|(A-\overline{A})(y,s) \Big|^{p}dyds\Bigg)^{1/p}\Bigg(\int_{(1-\sigma_0^2)\tau-\sigma_0^2}^{\tau}
\int_{\sigma_0\Omega}
\Big|\nabla_y G^*(y,s;x,t)\Big|^{p^*}dyds\Bigg)^{1/p^*}\\
&\leq
C(\tau-(1-\sigma_0^2)\tau+\sigma_0^2)^{1/p}\|A-\overline{A}\|_{L^p(\Omega)}\Bigg
(\int_{(1-\sigma_0^2)\tau-\sigma_0^2}^{\tau}(t-s)^{n/2-(n+1)p^{*}/2}ds\Bigg
)^{1/p^*}.
\end{aligned}$$
with $1/p+1/p^*=1$. Here, by $p>n+2$, $n/2-(n+1)p^{*}/2>-1$. Hence, $I_2(x,t)\leq C\|A-\overline{A}\|_{L^p(\Omega)}$. Therefore, by Lemma \ref{lem::fourier} and \eqref{eqn::es5},
$$
\|w\|_{L^2({\sigma}_0 \Omega \times ((1-{\sigma}_0^2)\tau-{\sigma}_0^2,\tau))}\leq C\Big(\|A-\overline{A}\|^{1/2}_{L^p(\Omega)}\|\nabla u\|_{L^2(\sigma_0 \Omega \times ((1-\sigma_0^2)\tau-\sigma_0^2,\tau))}+\|f\|_{L^2(\sigma_0 \Omega \times ((1-\sigma_0^2)\tau-\sigma_0^2,\tau))}\Big).$$
Since $\|A-\overline{A}\|_{Y^{1+\alpha',p}}< \varepsilon$ implies $\|A-\overline{A}\|_{L^p(\Omega)}< C\varepsilon$, we have
$$
\|w\|_{L^2({\sigma}_0 \Omega \times
((1-{\sigma}_0^2)\tau-{\sigma}_0^2,\tau))}\leq
C\Big(\varepsilon^{1/2}\|u\|_{L^2(\Omega \times
(-1,\tau))}+\|f\|_{L^2(\Omega \times (-1,\tau))}\Big).$$ Further, by
applying \eqref{eqn::es5} to $w$, we have for a smaller $\sigma_0$
$$
\|w\|_{L^2(((1-\sigma^2)\tau-\sigma^2,\tau);H^1(\sigma \Omega))}\leq
C\Big(\varepsilon^{1/2}\|u\|_{L^2(\Omega \times
(-1,\tau))}+\|f\|_{L^2(\Omega \times (-1,\tau))}\Big)\quad
(0<\sigma\leq \sigma_0).$$ This ends the proof.
\end{proof}

\begin{proof}[Proof of \eqref{eqn::es2}]
We adapt the proof of Proposition 4.1 in \cite{L-N} to our case.

Let $\sigma_k=\frac{3}{4^{k+1}}$, $\overline{\sigma}_k=\frac{2}{4^{k+1}}$, $\widetilde{\sigma}_k=\frac{1}{4^{k+1}}$ ($k=0,1,2,\cdots$) and $M=\|u\|_{L^2(\frac{1}{2}\Omega \times (\frac{3}{4}\tau-\frac{1}{4},\tau))}$. We will prove by induction that there exist $w_k\in W(\sigma_k\Omega \times ((1-\sigma_k^2)\tau-\sigma_k^2,\tau))$ ($k=0,1,2,\cdots$) which satisfy
$$(\partial_t -\nabla \cdot \overline{A}\nabla)w_k=0\quad \mbox{in}\quad \sigma_k\Omega \times ((1-\sigma_k^2)\tau-\sigma_k^2,\tau),\eqno{(4.3)'_k}$$
$$\|w_k\|_{L^2(\overline{\sigma}_k\Omega \times ((1-\overline{\sigma}_k^2)\tau-\overline{\sigma}_k^2,\tau))}\leq CM4^{-\frac{k(n+4+2\alpha')}{2}},\eqno{(4.4)'_{1,k}}$$
$$\|\nabla_x w_k\|_{L^{\infty}(\widetilde{\sigma}_k\Omega \times ((1-\widetilde{\sigma}_k^2)\tau-\widetilde{\sigma}_k^2,\tau))}\leq CM4^{-k\alpha'},\eqno{(4.4)'_{2,k}}$$
and
$$\Bigg \|u-\sum_{j=0}^{k} w_j\Bigg\|_{L^{\infty}(\widetilde{\sigma}_k\Omega \times ((1-\widetilde{\sigma}_k^2)\tau-\widetilde{\sigma}_k^2,\tau))}\leq M4^{-\frac{(k+1)(n+4+2\alpha')}{2}}.\eqno{(4.5)'_k}$$

Before starting the induction argument, we note that for any $\varepsilon_0>0$, we have $\|A-\overline{A}\|_{Y^{1+\alpha',p}}\leq \varepsilon_0$ by considering a dilated $\Omega$ instead of $\Omega$, and $\|A-\overline{A}\|_{L^p(\Omega)}\leq \|A-\overline{A}\|_{Y^{1+\alpha',p}}$.

Since $u\in W(\frac{1}{2}\Omega \times (\frac{3}{4}\tau-\frac{1}{4},\tau))$ solves $(\partial_t -\nabla \cdot {A}\nabla)u=0$ in $\frac{1}{2}\Omega \times (\frac{3}{4}\tau-\frac{1}{4},\tau)$, we have from Lemma \ref{lem::3.1m} that there exists a solution $w_0\in W(\sigma_0\Omega \times ((1-\sigma_0^2)\tau-\sigma_0^2,\tau))$ of
$(\partial_t -\nabla \cdot \overline{A}\nabla)w_k=0$ in $\sigma_0\Omega \times ((1-\sigma_0^2)\tau-\sigma_0^2,\tau)$ with the estimates:
$$
\Vert
u-w_0\Vert_{L^2(\overline\sigma_0\Omega\times((1-\overline\sigma_0^2)\tau-\overline\sigma_0^2,\tau))}
\le C_0\varepsilon^{1/2} M\le M
4^{-\frac{n+4+2\alpha'}{2}}\eqno{(4.5)'_0}
$$
by taking $\varepsilon_0>0$ small enough to satisfy
$C_0\varepsilon^{1/4}\le1$, $\varepsilon_0^{1/4}\le
4^{-\frac{n+4+2\alpha'}{2}}$ and hence
$$
\Vert w_0\Vert_{L^2(\overline\sigma_0\Omega\times((1-\overline\sigma_0^2)\tau-\overline\sigma_0^2,\tau))}\le
2M,
\eqno{(4.4)'_{1,0}}
$$
and from the interior estimate for $\partial_t-\nabla\cdot\overline A\nabla$, we have
$$
\Vert\nabla_x w_0\Vert_{L^2(\widetilde\sigma\Omega\times((1-\widetilde\sigma_0^2)\tau-\widetilde\sigma_0^2,\tau))}
\le C_0 M.
$$
Hereafter, $C_0>0$ is a general constant for the estimate of solutions for our parabolic operators which is independent of the general constant $C$ in the estimates (4.4)'-(4.5)'.

Suppose (4.3)'-(4.5)' hold up to $k\ge0$. Then, we will prove them for $k+1$. Let
$$
W(x,t)=\Big(u-\sum_{j=0}^k w_j\Big)(\widetilde\sigma_k
x,\widetilde\sigma_k^2 t+(1-\widetilde\sigma_k^2)\tau)\quad((x,t)
\in\Omega\times(-1,\tau)),
$$
$$
A_{k+1}(x)=A(\widetilde\sigma_k x),\,\,\overline A_{k+1}(x)=\overline A(\widetilde\sigma_k x)
$$
and
$$
f_{k+1}(x,t)=\widetilde\sigma_0(A_{k+1}-\overline A_{k+1})(x)
\sum_{j=0}^k\nabla w_j(\widetilde\sigma_k x,\widetilde\sigma_k^2
t+(1-\widetilde\sigma_k^2)\tau).
$$
Then, it is not hard to see that $(\partial_t-\nabla\cdot A_{k+1}\nabla)W=\nabla\cdot f_{k+1}$ in $\Omega\times(-1,\tau)$.
Further, we have
$$
\sum_{j=0}^k|(\nabla_x w_j)(\widetilde\sigma_k
x,\widetilde\sigma_k^2 t+(1-\widetilde\sigma_k^2)\tau)| \le
CM\sum_{j=0}^k
4^{-j\alpha'}\le\frac{CM}{1-4^{-\alpha'}}\,\,((x,t)\in\Omega\times(-1,\tau))
$$
by $(4.4)'_{2,k}$, and
$$
\Vert W\Vert_{L^2(\Omega\times(-1,\tau))}\le M4^{-(k+1)(1+\alpha')}
$$
by $(4.5)'_k$.
Observe that
$$
\Vert A_{k+1}-\overline A_{k+1}\Vert_{L^2(\Omega)}\le 4^{-(k+1)\alpha'}\Vert A-\overline A\Vert_{Y^{1+\alpha',2}}\le 4^{-(k+1)\alpha'}\Vert A-\overline A\Vert_{Y^{1+\alpha',p}}
\le 4^{-(k+1)\alpha'}\varepsilon_0.
$$
Together with this and $(4.4)'_{2,k}$, we have
$$
\Vert f_{k+1}\Vert_{L^2(\Omega\times (-1,\tau))}\le CM4^{-(k+1)(1+\alpha')}\varepsilon_0
\sum_{j=0}^k 4^{-j\alpha'}\le
\frac{CM}{1-4^{-\alpha'}}\varepsilon_0.
$$
By Lemma \ref{lem::3.1m}, there exists a solution $v_{k+1}\in W(\sigma_0\Omega\times((1-\sigma_0^2)\tau-\sigma_0^2,\tau))$ of
$(\partial_t-\nabla\cdot\overline A_{k+1}\nabla) v_{k+1}=0$ in $\sigma_0\Omega
\times((1-\sigma^0)\tau-\sigma_0^2,\tau)$ with the estimate
\begin{equation*}
\begin{aligned}
\Vert W-v_{k+1}\Vert_{L^2(\overline\sigma_0\Omega\times ((1-\overline\sigma_0^2)\tau-\overline\sigma_0^2,\tau))}
\le&
C_0(\Vert f_{k+1}\Vert_{L^2(\Omega\times(-1,\tau)}+4^{-(k+1)\alpha'/2}\varepsilon_0^{1/2}
\Vert W\Vert_{L^2(\Omega\times(-1,\tau))})\\
\le&
C_0 M4^{-(k+1)(1+\alpha')}\Bigg (\frac{C}{1-4^{-\alpha'}}\varepsilon_0+\varepsilon_0^{1/2}\Bigg ).
\end{aligned}
\end{equation*}

Let
$w_{k+1}(x,t)=v_{k+1}(\widetilde\sigma_k^{-1}x,\widetilde\sigma_k^{-2}t+(1-\widetilde\sigma_k^{-2})\tau)$
$((x,t)\in\sigma_{k+1}\Omega\times((1-\sigma_{k+1}^2)\tau-\sigma_{k+1}^2,\tau))$.
Then, it is easy to see that $(\partial_t-\nabla\cdot\overline
A\nabla)w_{k+1}=0$ in
$\sigma_{k+1}\Omega\times((1-\sigma_{k+1}^2)\tau-\sigma_{k+1}^2,\tau))$.
Further, by $v_{k+1}(x,t)=w_{k+1}(\widetilde\sigma_k
x,\widetilde\sigma_k^{2}t+(1-\widetilde\sigma_k^{2})\tau)$
($(x,t)\in
\sigma_0\Omega\times((1-\sigma_0^2)\tau-\sigma_0^2,\tau)$),
$$(W-v_{k+1})(x,t)=\Big (u-\sum_{j=0}^{k+1}w_j\Big )(\widetilde\sigma_k x,\widetilde\sigma_k^{2}t+(1-\widetilde\sigma_k^{2})\tau),$$
i.e.
$$\Big(u-\sum_{j=0}^{k+1}w_j\Big)(x,t)=(W-v_{k+1})(\widetilde\sigma_k^{-1}x,\widetilde\sigma_k^{-2}t+(1-\widetilde\sigma_k^{-2})\tau).$$

Since $\widetilde\sigma_0\Omega\times((1-\widetilde\sigma_0^2)\tau-\widetilde\sigma_0^2,\tau) \subset \overline\sigma_0\Omega\times((1-\overline\sigma_0^2)\tau-\overline\sigma_0^2,\tau)$ and $(x,t)\in \widetilde\sigma_{k+1}\Omega\times((1-\widetilde\sigma_{k+1}^2)\tau-\widetilde\sigma_{k+1}^2,\tau)$ which equivalents to $(\widetilde\sigma_k^{-1}x,\widetilde\sigma_k^{-2}t+(1-\widetilde\sigma_k^{-2})\tau)\in \widetilde\sigma_0\Omega\times((1-\widetilde\sigma_0^2)\tau-\widetilde\sigma_0^2,\tau)$, we have
$$\Big\|u-\sum_{j=0}^{k+1}w_j\Big\|_{L^2(\widetilde\sigma_{k+1}\Omega\times((1-\widetilde\sigma_{k+1}^2)\tau-\widetilde\sigma_{k+1}^2,\tau))}=\widetilde\sigma_k^{-(n+2)/2}\|W-v_{k+1}\|_{L^2(\widetilde\sigma_0\Omega\times((1-\widetilde\sigma_0^2)\tau-\widetilde\sigma_0^2,\tau))}$$
and hence by the estimate of $\|W-v_{k+1}\|_{L^2(\overline\sigma_0\Omega\times((1-\overline\sigma_0^2)\tau-\overline\sigma_0^2,\tau))}$, we have
$$
\begin{aligned}
\Big\|u-\sum_{j=0}^{k+1}w_j\Big\|_{L^2(\widetilde\sigma_{k+1}\Omega\times((1-\widetilde\sigma_{k+1}^2)\tau-\widetilde\sigma_{k+1}^2,\tau))}\leq &
C_0M 4^{-(k+1)(1+\alpha')} \widetilde\sigma_k^{(n+2)/2}\Big (\frac{C}{1-4^{-\alpha'}}\varepsilon_0+\varepsilon_0^{1/2}\Big )\\
\leq &2C_0 M \varepsilon_0^{1/4}\max\Big(\frac{C}{1-4^{-\alpha'}},1\Big)4^{-\frac{(k+2)(n+4+2\alpha')}{2}}.
\end{aligned}
$$
Therefore, by taking $\varepsilon_0>0$ small enough to satisfy $2C_0
\varepsilon_0^{1/4}\max\Big(\frac{C}{1-4^{-\alpha'}},1\Big)\leq 1$,
we have $(4.5)'_{k+1}$.

By the estimate of $\|W-v_{k+1}\|_{L^2(\overline\sigma_0\Omega\times((1-\overline\sigma_0^2)\tau-\overline\sigma_0^2,\tau))}$ and the interior estimate for $\partial_t -\nabla \cdot\overline A\nabla $,
$$
\begin{aligned}
&\|\nabla v_{k+1}\|_{L^{\infty}(\widetilde\sigma_0\Omega\times((1-\widetilde\sigma_0^2)\tau-\widetilde\sigma_0^2,\tau))}\\
&\leq
C_0\|v_{k+1}\|_{L^2(\overline\sigma_0\Omega\times((1-\overline\sigma_0^2)\tau-\overline\sigma_0^2,\tau))}\\
&\leq C_0\Big (\|W\|_{L^2(\overline\sigma_0\Omega\times((1-\overline\sigma_0^2)\tau-\overline\sigma_0^2,\tau))}+\|W-v_{k+1}\|_{L^2(\overline\sigma_0\Omega\times((1-\overline\sigma_0^2)\tau-\overline\sigma_0^2,\tau))}\Big )\\
&\leq C_0M \Bigg [4^{-(k+1)(1+\alpha')}+C_04^{-(k+1)(1+\alpha')}\Big (\frac{C}{1-4^{-\alpha'}}\varepsilon_0+\varepsilon_0^{1/2}\Big )\Bigg]\\
&\leq C_0M 4^{-(k+1)(1+\alpha')}\Bigg [1+2C_0\varepsilon_0^{1/2}\max\Big(\frac{C}{1-4^{-\alpha'}},1\Big)\Bigg].
\end{aligned}
$$
Hence, if we adjust $C$ and $\varepsilon_0$ to satisfy $C \geq C_0 [1+2C_0\varepsilon_0^{1/2}\max(\frac{C}{1-4^{-\alpha'}},1)]$, then we have $(4.4)'_{2,k+1}$. Finally, to see $(4.4)'_{1,k+1}$, we have from the above estimate of $\|v_{k+1}\|_{L^{2}(\widetilde\sigma_0\Omega\times((1-\widetilde\sigma_0^2)\tau-\widetilde\sigma_0^2,\tau))}$ and $(x,t)\in \overline\sigma_{k+1}\Omega\times((1-\overline\sigma_{k+1}^2)\tau-\overline\sigma_{k+1}^2,\tau)$ which is equivalent to $(\widetilde\sigma_k^{-1}x,\widetilde\sigma_k^{-2}t+(1-\widetilde\sigma_k^{-2})\tau)\in \overline\sigma_0\Omega\times((1-\overline\sigma_0^2)\tau-\overline\sigma_0^2,\tau)$. we have
$$
\begin{aligned}
\|w_{k+1}\|_{L^2(\overline\sigma_{k+1}\Omega\times((1-\overline\sigma_{k+1}^2)\tau-\overline\sigma_{k+1}^2,\tau))}\leq &
C_0M 4^{-(k+1)(1+\alpha')}\tilde\sigma_k^{(n+2)/2} \Bigg [1+2C_0\varepsilon_0^{1/2}\max\Big(\frac{C}{1-4^{-\alpha'}},1\Big)\Bigg]\\
\leq & C_0M 4^{-\frac{(k+1)(n+4+2\alpha')}{2}}\Bigg
[1+2C_0\varepsilon_0^{1/2}\max\Big(\frac{C}{1-4^{-\alpha'}},1\Big)\Bigg].
\end{aligned}
$$
Therefore, if we further adjust $C$ and $\varepsilon_0$ to satisfy $C \geq C_0 [1+2C_0\varepsilon_0^{1/2}\max(\frac{C}{1-4^{-\alpha'}},1)]$, then we have $(4.4)'_{1,k+1}$. Thus, we have proven $(4.3)'-(4.5)'$.

Let $C$ be a general constant which is different from the general constant $C$ in $(4.4)'-(4.5)'$. As an easy consequence of $(4.4)'_{2,k}$, we have
$$
\|w_{k}\|_{L^{\infty}(\widetilde\sigma_k\Omega\times((1-\widetilde\sigma_k^2)\tau-\widetilde\sigma_k^2,\tau))}\leq CM4^{-k(1+\alpha')}.\eqno{(4.6)'}
$$
Together with this and $(4.4)'_2$,
$$
\begin{aligned}
\Big | \sum_{j=0}^k w_j(x,t)-\sum_{j=0}^{\infty} w_j(0,\tau)\Big |\leq &
CM\sum_{j=0}^k4^{-j\alpha'}|(x,t-\tau)| + CM \sum_{j=k+1}^{\infty}4^{-j(1+\alpha')}\\
\leq & CM|(x,t-\tau)| +CM4^{-k(1+\alpha')}.
\end{aligned}
$$
Hence, we have
$$
\begin{aligned}
&\Big \| u-\sum_{j=0}^{\infty} w_j(0,\tau)\Big\|_{L^2(\widetilde\sigma_{k}\Omega\times((1-\widetilde\sigma_{k}^2)\tau-\widetilde\sigma_{k}^2,\tau))}\\
&\leq
\Big \| u-\sum_{j=0}^{k} w_j\Big\|_{L^2(\widetilde\sigma_{k}\Omega\times((1-\widetilde\sigma_{k}^2)\tau-\widetilde\sigma_{k}^2,\tau))}+\Big \| \sum_{j=0}^{k} w_j-\sum_{j=0}^{\infty} w_j(0,\tau)\Big\|_{L^2(\widetilde\sigma_{k}\Omega\times((1-\widetilde\sigma_{k}^2)\tau-\widetilde\sigma_{k}^2,\tau))}\\
&\leq
M4^{-\frac{(k+1)(n+4+2\alpha')}{2}}+CM\Bigg[\int_{(1-\widetilde\sigma_{k}^2)\tau-\widetilde\sigma_{k}^2}^{\tau}\int_{\widetilde\sigma_{k}\Omega}\Big(|x|^2+(t-\tau)^2+4^{-2k(1+\alpha')}\Big)dxdt\Bigg]^{1/2}.
\end{aligned}
$$
Hence, by $(t-\tau)^2\leq
\widetilde\sigma_k^4(\tau+1)^2=4^{-4(k+1)}(\tau+1)$, we can absorb
$(t-\tau)^2$ into $4^{-2k(1+\alpha')}$. Since
$\int_{\widetilde\sigma_{k}\Omega}|x|^2dx \leq C4^{-(k+1)(n+2)}$,
$\int_{\widetilde\sigma_{k}\Omega}4^{-2k(1+\alpha')}dx \leq
C4^{-2k(1+\alpha')-(k+1)n}$ and $4^{-2k(1+\alpha')-(k+1)n}\leq
4^{-(k+1)(n+2)}$ for large $k$, we have
$$
\Big \| u-\sum_{j=0}^{\infty} w_j(0,\tau)\Big\|_{L^2(\widetilde\sigma_{k}\Omega\times((1-\widetilde\sigma_{k}^2)\tau-\widetilde\sigma_{k}^2,\tau))}\leq CM 4^{-\frac{(k+1)(n+2)}{2}}\widetilde\sigma_{k}.
$$
Therefore, $u(0,\tau)=\sum_{j=0}^{\infty} w_j(0,\tau)$ and $|\nabla_x u(0,\tau)|\leq CM$ if $\nabla_x u(0,\tau)$ exists.
\end{proof}

\section{Gradient Estimate of Fundamental Solution}
In this section, as we already mentioned in the introduction, we
will give an estimate of $\nabla_x\Gamma(x,t;y,s)$ for a fundamental
solution $\Gamma(x,t;y,s)$ of the operator $\mathcal{L}$ as an
application of our main theorem (Theorem \ref{thm::main}) by
following the argument given in \cite{D-V}. For the readers'
convenience, we repeat the argument.

It is well known that there exists a fundamental solution
$\Gamma(x,t;y,s)$ with the estimate
\begin{equation}\label{eq3.13}
\Gamma(x,t;y,s) \leq \frac{C}{[4\pi
(t-s)]^{n/2}}e^{-\frac{|x-y|^2}{C(t-s)}}\chi_{[s,\infty]}\,\,(t,s\in{\Bbb R},\,t>s,\,\,\mbox{a.e.}\, x, y\in D),\end{equation}
which is positive for $t>s$, where $C>0$ is a constant which depends only on $A$, $n$ and $\chi_{[s,\infty)}$ is the characteristic function of $[s,\infty)$.
(See \cite{A}.)

Now we state the estimate of $\nabla_x\Gamma(x,t;y,s)$.

\begin{proposition}\label{prop3.6} Let $\Gamma(x,t;y,s)$ be the previous fundamental solution of the
operator $\partial_t-\nabla\cdot A\nabla$. There exists
a constant $C>0$ depending only on $A$ and $n$ such that
\begin{equation}\label{eq3.14}
|\nabla_x \Gamma(x,t;y,s)|\leq
\frac{C}{(t-s)^{\frac{n+1}{2}}}e^{-\frac{|x-y|^2}{C(t-s)}},\end{equation}
for any $t,s\in{\Bbb R}$, $t>s$ and almost every $x,y\in D$.
\end{proposition}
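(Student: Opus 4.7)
The plan is to apply the interior estimate of Theorem \ref{thm::main} to $v(\xi,\tau):=\Gamma(\xi,\tau;y,s)$, which for fixed $(y,s)$ is a weak solution of $\mathcal{L}v=0$ in $D\times(s,\infty)$, and then to convert the $L^{2}$ norm on the right-hand side of \eqref{eqn::es1} into a pointwise estimate using the Gaussian bound \eqref{eq3.13} on $\Gamma$ itself.

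I would choose the scale $\rho:=\sqrt{t-s}/4$, so that $4\rho^{2}=(t-s)/4$; after a routine translation in the time variable, the cylinder $2\rho\Omega(x)\times(t-4\rho^{2},t)$ lies in $D\times(s,\infty)$ with $\tau-s\geq 3(t-s)/4$ throughout, and Theorem \ref{thm::main} yields
\[
|\nabla_{x}\Gamma(x,t;y,s)|\leq \frac{c}{\rho^{n/2+2}}\,\|\Gamma(\cdot,\cdot;y,s)\|_{L^{2}(2\rho\Omega(x)\times(t-4\rho^{2},t))}.
\]
The central computation is the estimation of this $L^{2}$ norm. Substituting \eqref{eq3.13} and using the elementary inequality $|\xi-y|^{2}\geq\tfrac{1}{2}|x-y|^{2}-|\xi-x|^{2}$ together with $|\xi-x|^{2}\leq 4n\rho^{2}$ and $\tau-s\geq 3(t-s)/4$, the additive error $O(\rho^{2}/(\tau-s))$ in the exponent is uniformly bounded, so the Gaussian centered at $y$ with variance $\tau-s$ can be replaced by one centered at $x$ with variance $C(t-s)$, producing
\[
\|\Gamma(\cdot,\cdot;y,s)\|_{L^{2}(2\rho\Omega(x)\times(t-4\rho^{2},t))}^{2} \leq C\,(2\rho)^{n}\,e^{-|x-y|^{2}/(C(t-s))}\int_{t-4\rho^{2}}^{t}(\tau-s)^{-n}\,d\tau.
\]
Since the time integral is of order $(t-s)^{1-n}$ and $(2\rho)^{n}\sim(t-s)^{n/2}$, the $L^{2}$ norm is at most $C(t-s)^{(2-n)/4}e^{-|x-y|^{2}/(C(t-s))}$.

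Multiplying by the prefactor $\rho^{-(n/2+2)}\sim(t-s)^{-(n+4)/4}$ then gives exactly the decay $(t-s)^{-(n+1)/2}e^{-|x-y|^{2}/(C(t-s))}$ required by \eqref{eq3.14}. I expect the main obstacle to be the careful bookkeeping of constants in the Gaussian exponent when transferring decay from $|\xi-y|$ to $|x-y|$: the scale $\rho=\sqrt{t-s}/4$ is tuned precisely so that the error $\rho^{2}/(\tau-s)$ in the exponent stays uniformly bounded, since choosing $\rho$ too large would degrade the Gaussian factor and choosing it too small would spoil the prefactor scaling. Once these constants are tracked carefully, the proof reduces to a short combination of the interior gradient estimate of Theorem \ref{thm::main} with the known pointwise Gaussian bound on $\Gamma$.
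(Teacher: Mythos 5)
Your overall strategy is the same as the paper's: apply the interior estimate of Theorem~\ref{thm::main} to $\Gamma(\cdot,\cdot;y,s)$, then bound the resulting $L^2$ norm on the right-hand side using the known Gaussian bound \eqref{eq3.13} on $\Gamma$ itself. The difference lies in how the $L^2$ norm is estimated. The paper isolates this step as Proposition~\ref{prop4.2}, using the scale $\rho=\tfrac14[|x_0-\xi|^2+t_0-\tau]^{1/2}$, which is tuned to split into two cases ($\tau\ge t_0-\rho^2$ versus $\tau<t_0-\rho^2$): in the first the annulus estimate $C^{-1}\rho\le|x-\xi|\le C\rho$ lets them evaluate the resulting one-dimensional integral, and in the second the quantity $|x_0-\xi|^2/(t_0-\tau)$ is automatically bounded so no Gaussian decay is needed. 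You instead take the uniform parabolic scale $\rho=\sqrt{t-s}/4$ and transfer the Gaussian factor from the integration variable $\xi$ to the fixed center $x$ via the elementary inequality $|\xi-y|^2\ge\tfrac12|x-y|^2-|\xi-x|^2$, noting that $|\xi-x|^2/(\tau-s)\lesssim\rho^2/(t-s)$ stays uniformly bounded on the cylinder. Both routes give precisely the factor $(t-s)^{-(n+1)/2}e^{-|x-y|^2/(C(t-s))}$; your version is shorter and avoids the case analysis and the separate lemma, at the cost of a slightly more degraded constant in the exponent (which is irrelevant to the statement). One point worth a sentence in a finished write-up, though it affects the paper's proof identically and is left implicit there: for very large $t-s$ the cylinder $2\rho\Omega(x)$ would not remain inside $D$, so $\rho$ must be capped by $\mathrm{dist}(x,\partial D)$, and one should check that the estimate degrades in a controlled way in that regime.
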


\begin{remark}
We recall
that a fundamental solution $G^*(x,t,y,s)$ of the operator $\partial_t+\nabla\cdot A\nabla$ can be given by
\begin{equation}\label{eq3.12}
G^*(x,t;y,s)=\Gamma(y,s;x,t) \quad ((x,t),(y,s)\in Q:=D\times{\Bbb
R},\,(x,t)\neq (y,s)).\end{equation} Hence, estimates similar to
\eqref{eq3.13} and \eqref{eq3.14} hold for $G^*(x,t;y,s)$.
\end{remark}

Before proving Proposition \ref{prop3.6} we give the following
estimate which is necessary for the proof.

\begin{proposition}\label{prop4.2} Let $Q_{\rho}(x_0,t_0)=B_\rho(x_0)\times (t_0-\rho^2,t_0)$, $B_\rho(x_0):=\{x\in{\Bbb R}^n\,;\,|x-x_0|<\rho\}$. There exists a constant $C>0$ depending only on $A$ and $n$ such that the following
inequality holds.
\begin{equation}\label{eq4.9}
\int_{Q_{\rho}(x_0,t_0)}|\Gamma(x,t;\xi,\tau)|^2dxdt\leq
C\frac{\rho^n}{(t_0-\tau)^{n-1}}e^{-\frac{|x_0-\xi|^2}{C(t_0-\tau)}}\,\,(\tau<t_0,\,\mbox{\rm
a.e.}\,\xi\in D),
\end{equation}
where $\rho=\frac{1}{4}[|x_0-\xi|^2+t_0-\tau]^{1/2}$.\end{proposition}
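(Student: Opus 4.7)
The plan is to apply the pointwise heat-kernel estimate \eqref{eq3.13} inside the integral, then reduce the double integral to a one-dimensional time integral, and split the analysis by comparing the spatial separation $d := |x_0-\xi|$ with the parabolic length $\sqrt{T}$ where $T := t_0-\tau$. The choice $\rho^2 = (d^2+T)/16$ is designed precisely to balance these two scales, and the natural split $d \le 2\rho$ versus $d > 2\rho$ translates (after substituting the definition of $\rho$) into $d^2 \le T/3$ versus $d^2 > T/3$.

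Write $\lambda := t-\tau$, so that \eqref{eq3.13} gives $\Gamma(x,t;\xi,\tau)^2 \le C \lambda^{-n} e^{-2|x-\xi|^2/(C\lambda)}$. In the diagonal regime $d^2 \le T/3$, the relation $\rho^2 = (d^2+T)/16$ forces $\rho^2 \le T/12$; this keeps $\lambda$ comparable to $T$ across the whole interval of integration, while $e^{-d^2/(CT)}$ is bounded below by a dimensional constant. A crude volume estimate $\int_{Q_\rho}\lambda^{-n}\,dx\,dt \le C\rho^{n+2}/T^n$, combined with $\rho^2/T \le 1/12$, produces $C\rho^n/T^{n-1}$, and the missing exponential factor is harmlessly absorbed into the constant.

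In the off-diagonal regime $d > 2\rho$, the triangle inequality yields $|x-\xi| \ge d-\rho \ge d/2$ throughout $B_\rho(x_0)$, hence $\Gamma^2 \le C\lambda^{-n} e^{-d^2/(2C\lambda)}$. Pulling the factor $|B_\rho|\sim \rho^n$ out front reduces the task to bounding the one-dimensional integral
\[
\int_{\max(0,T-\rho^2)}^{T} \lambda^{-n} e^{-d^2/(2C\lambda)}\,d\lambda,
\]
and the substitution $s = d^2/(2C\lambda)$ converts this into an incomplete Gamma integral proportional to $(d^2)^{-(n-1)}\int_{d^2/(2CT)}^{\infty} s^{n-2} e^{-s}\,ds$. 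The standard tail estimate $\int_{s_0}^{\infty} s^{n-2} e^{-s}\,ds \le C(1+s_0)^{n-2} e^{-s_0}$ applied at $s_0 = d^2/(2CT)$, combined with the inequality $1/d^2 \le 3/T$ valid in this regime, delivers exactly the claimed $C\rho^n T^{-(n-1)} e^{-d^2/(CT)}$ after possibly enlarging $C$.

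The chief technical obstacle is that when $\rho^2 > T$ (which by the scaling $\rho^2 = (d^2+T)/16$ forces $d^2 > 15T$), the time interval of integration straddles the singular point $\lambda = 0$, and $\lambda^{-n}$ by itself is non-integrable there. The resolution is that in this very regime $d$ is necessarily much larger than $\sqrt{T}$, so the exponential $e^{-d^2/(2C\lambda)}$ provides super-exponential decay as $\lambda \to 0^+$; the incomplete Gamma substitution quantifies the cancellation between the singularity of $\lambda^{-n}$ and the exponential, yielding the correct bound of order $e^{-d^2/(2CT)}$ times the right polynomial prefactor. Managing this cancellation is the crucial point where the specific choice $\rho = \tfrac{1}{4}\sqrt{d^2+T}$ pays off.
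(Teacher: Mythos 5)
Your proof is correct, but it takes a genuinely different route from the paper's. The paper splits according to the sign of $T-\rho^2$ (equivalently, whether $|x_0-\xi|^2 \ge 15T$ or $< 15T$). In its case i) it exploits the two-sided comparison $|x-\xi|\sim\rho$ on $B_\rho(x_0)$, then sub-splits again according to whether $T$ lies in the monotone range of $s\mapsto s^{-n}e^{-\rho^2/(C_2 s)}$; in case ii) it simply maximises the integrand over $Q_\rho$, obtaining a bound of order $\rho^{-2n}$, and absorbs the exponential into the constant. You instead split at $d=2\rho$ (equivalently $d^2$ vs. $T/3$). In your diagonal regime the time variable stays bounded away from $\lambda=0$ and a crude volume bound suffices; in your off-diagonal regime you pull out $|B_\rho|$, substitute $s=d^2/(2C\lambda)$, and invoke a single incomplete Gamma tail bound that handles the singular sub-case ($\rho^2>T$, where the time integral reaches down to $\lambda=0$) and the non-singular one uniformly. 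In effect, your treatment collapses the paper's two sub-cases of case i) into one calculation, at the cost of carrying the Gamma-tail estimate. One small caveat worth recording: as stated, the bound $\int_{s_0}^{\infty}s^{n-2}e^{-s}\,ds\le C(1+s_0)^{n-2}e^{-s_0}$ is false for $n=1$ when $s_0\to 0^+$ (the integral diverges logarithmically while the right side stays bounded), but it is legitimate in the range you actually use, since in the off-diagonal regime $s_0=d^2/(2CT)>1/(6C)$ is bounded away from zero, where $\int_{s_0}^{\infty}s^{-1}e^{-s}\,ds\le e^{-s_0}/s_0\le 6C\,e^{-s_0}$.
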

\begin{proof} From the inequality \eqref{eq3.13} we have
\begin{equation}\label{eq4.10}
\int_{Q_{\rho}(x_0,t_0)}|\Gamma(x,t;\xi,\tau)|^2dxdt\leq
C_1\int_{Q_{\rho}(x_0,t_0)}\frac{1}{(t-\tau)^n}e^{-\frac{|x-\xi|^2}{2C_1(t-\tau)}}\chi_{[\tau,+\infty)}dxdt,\end{equation}
where $C_1>0$ is a constant depending only on $A$ and $n$. In what follows we
denote by $I$ the integral at the right-hand side of
\eqref{eq4.10}. We distinguish two cases
\begin{itemize}
\item[i)] $t_0-\rho^2 \le \tau < t_0$,

\item[ii)] $\tau < t_0-\rho^2$.\end{itemize} Let us consider case
i). It is easy to see that there exists a constant
$C>0$ such that
\begin{equation}\label{eq4.11}
C^{-1}\rho \leq |x-\xi| \leq C\rho \quad (x\in
B_{\rho}(x_0)).\end{equation} By \eqref{eq4.11} we have
\begin{equation}\label{eq4.12}
I \leq
c_n\rho^n\int_0^{t_0-\tau}s^{-n}e^{\frac{-\rho^2}{C_2s}}ds,\end{equation}
where $c_n>0$ is a constant depending only on $n$  and
$C_2>0$ is a constant depending only on $A$ and $n$. Now we assume $0 <
t_0-\tau<\frac{\rho^2}{nC_2}$. Since
$s^{-n}e^{-\frac{\rho^2}{C_2s}}$ is an increasing function in
$(0,\frac{\rho^2}{nC_2})$, we have by \eqref{eq4.12}
\begin{equation}\label{eq4.13}
I \leq
\frac{c_n\rho^n}{(t_0-\tau)^{n-1}}e^{-\frac{\rho^2}{C(t_0-\tau)}}.\end{equation}
Further, if we assume $\frac{\rho^2}{nC_2}\leq t_0-\tau \leq
\rho^2$, then
\begin{equation*}
\rho^{-n}I \leq
C_1\int_0^{\rho^2}s^{-n}e^{-\frac{\rho^2}{C_2s}}ds\leq
\frac{C}{(t_0-\tau)^{n-1}}e^{-\frac{\rho^2}{C_2(t_0-\tau)}}
\end{equation*}
due to the equivalence of $t-\tau$ and $\rho^2$, where $c_n$ and $C_2$ are the same kind of constants as before.
Hence, by the last inequality and \eqref{eq4.13}, we have the Proposition
in case i).

Let us consider case ii). It is easy to see that
\begin{equation}\label{eq4.14}
6\rho^2 \leq |x-\xi|^2 + t-\tau \leq 60\rho^2,\end{equation} for
every $(x,t)\in Q_{\rho}(x_0,t_0)$. Moreover, denoting
\begin{equation*}
M_{\rho}=\max\Bigg\{\frac{e^{-\frac{|x-\xi|^2}{2C_1(t-\tau)}}}{(t-\tau)^n}
; (x,t)\in Q_{\rho}(x_0,t_0)\Bigg\}\end{equation*} and taking into
account \eqref{eq4.14}, we have
\begin{equation}\label{eq4.15}
M_{\rho}\leq C\Big(\frac{C_1}{\rho^2}\Big)^n,\end{equation} where
$C>0$ is a constant depending only on $n$. Now, since $\tau<t_0-\rho^2$, we have
\begin{equation}\label{eq4.16}\frac{|x_0-\xi|^2}{t_0-\tau}\leq
16.\end{equation} Therefore, by \eqref{eq4.15} and \eqref{eq4.16},
we have the Proposition in case ii) as well.\end{proof}

\begin{proof}[Proof of Proposition \ref{prop3.6}] By applying our main theorem to the
function $\Gamma(\cdot,\cdot;\xi,\tau)$, we have
\begin{equation}\label{eq4.18}
||\nabla\Gamma(\cdot,\cdot;\xi,\tau)||_{L^{\infty}(Q_{\rho}(x_0,t_0))}\leq
\frac{C}{\rho^{\frac{n+4}{2}}}\Bigg[\int_{Q_{2\rho}(x_0,t_0)}|\Gamma(x,t;\xi,\tau)|^2dxdt\Bigg]^{1/2}.
\end{equation}

Further, applying Proposition \ref{prop4.2} to the right-hand side of
\eqref{eq4.18} we have
\begin{equation*}
||\nabla
\Gamma(\cdot,\cdot;\xi,\tau)||_{L^{\infty}(Q_{\rho}(x_0,t_0))}\leq
\frac{C}{\rho^{\frac{n+4}{2}}}\Bigg[\frac{\rho^n}{(t_0-\tau)^{n-1}}e^{-\frac{|x_0-\xi|^2}{C(t_0-\tau)}}\Bigg]^{1/2}.\end{equation*}
Then, we immediately have \eqref{eq3.14}, because
\begin{equation*}
\frac{1}{\rho^2}\leq \frac{C}{t_0-\tau}.
\end{equation*}
\end{proof}

\section{Appendix A: Construction of Green Function in two Layered Cube}

In this section we will construct the Green function $G^*(x,t;y,s)$ of our operator $\partial_t+\nabla\cdot\overline A\nabla$ in $\sigma_0\Omega\times{\Bbb R}$ with Dirichlet boundary condition on $\partial(\sigma_0\Omega)\times{\Bbb R}$. If $G(x,t;y,s)$ is the Green function of the operator $\mathcal{L}=\partial_t-\nabla\cdot\overline A\nabla$ in $\sigma_0\Omega\times{\Bbb R}$ with Dirichlet boundary condition on $\partial(\sigma_0\Omega)\times{\Bbb R}$, we have
\begin{equation}\label{symmetry}
G^*(x,t;y,s)=G(y,s;x,t).
\end{equation}
Hence, it is enough to construct the Green function $G(x,t;y,s)$.

First we construct a fundamental solution $\Gamma(x,t;y,s)$ of $\mathcal{L}$. We divide the construction into two cases. They are $y_n>0$ and $y_n<0$. We first consider the case $y_n>0$. Let $A,B$ be positive definite symmetric
constant matrices. $A=(a_{ij})_{1\le i,j\le n},
B=(b_{ij})_{1\le i,j\le n}$. Define $\overline A=A+(B-A)\chi_{-}(\xi)$,
where
\begin{equation*} \chi_{-}(\xi)=\begin{cases} 0, \quad \xi_n > 0,\\
1, \quad \xi_n < 0.\end{cases}\end{equation*} Let $\Ga(x,t;y,s)$
be the fundamental solution for
$\p_t-\nabla\cdot(\overline A\nabla_{x})$, that is,
\begin{equation}
\p_t \Ga(x,t;y,s)-\nabla\cdot(\overline A\nabla
\Ga(x,t;y,s))=\delta(x-y)\delta(t-s).\end{equation} Note that
$\Gamma(x,t;y,s)$ is also the fundamental of the Cauchy problem at
$t=s$ for the operator $\p_t-\nabla\cdot\overline A\nabla$. Let
$\hG$ be the Laplace transform of $\Ga$ with respect to $t$, that
is,
\begin{equation}\hG (x,\tau;y,s) = \int_0^{\infty}
e^{-t\tau} \Ga (x,t;y,s)dt.\end{equation} Then $\hG$ satisfies
\begin{equation}
\tau
\hG(x,t)-\nabla\cdot(\overline A\nabla_{x}\hG(x,t))=\delta(x-y)e^{-\tau
s}.\end{equation} Now, we denote $\Ga$ for different regions as
follows:
\begin{equation}\Ga=\begin{cases}\Ga^{11} \quad \mbox{for}\quad
x_n > y_n,\\
\Ga^{12} \quad \mbox{for} \quad y_n > x_n > 0,\\
\Ga^2 \quad \mbox{for} \quad 0 > x_n.\end{cases}\end{equation} For
$\vp\in C_0^{\infty}(\RR^n)$, we have
\begin{align*}
0 & =\int_{\RR^n} [ \tau\hG\vp+\overline
A\nabla\hG\cdot\nabla\vp-e^{\tau
s}\delta(x-y)\vp]dx\\
& = \int_{\RR^n}[\tau\hG\vp-e^{-\tau s}\delta(x-y)\vp]dx\\
&  \quad +
\int_{x_n>y_n}A\nabla\hG^{11}\cdot\nabla\vp+\int_{y_n>x_n>0}A\nabla\hG^{12}\cdot\nabla
\vp+\int_{0>x_n}B\nabla\hG^{2}\cdot\nabla\vp\\
& = \int_{\RR^n}\tau\hG\vp dx-\int_{x_n=y_n}e^{-\tau
s}\delta(x'-y')\vp dx'-\int_{x_n=y_n}A\nabla\hG^{11}\cdot e_n \vp
dx'-\int_{x_n> y_n} \nabla\cdot(A\nabla\hG^{11})\vp dx \\
& \quad + \int_{x_n=y_n}A\nabla \hG^{12}\cdot e_n \vp
dx'-\int_{x_n=0}A\nabla \hG^{12}\cdot e_n\vp
dx'-\int_{y_n>x_n>0}\nabla\cdot(A\nabla\hG^{12})\vp\\
& \quad +\int_{x_n=0}B\nabla\hG^{2}\cdot
e_n \vp dx'-\int_{0> x_n}\nabla\cdot (B\nabla\hG^{2})\vp\\
& = \int_{\RR^n}\tau \hG \vp dx - \int_{x_n>y_n}\nabla(A\cdot
\nabla\hG^{11})\vp-\int_{y_n>x_n>0}\nabla\cdot(A\nabla\hG^{12})\vp
-\int_{0>x_n}\nabla\cdot (B\nabla\hG^{2})\vp\\
& \quad + \int_{x_n=y_n}[-e^{-\tau
s}\delta(x'-y')-A\nabla\hG^{11}\cdot e_n + A\nabla\hG^{12}\cdot
e_n]\vp dx'+\int_{x_n=0}[-A\nabla\hG^{12}\cdot e_n+
B\nabla\hG^{2}\cdot e_n]\vp dx'.
\end{align*} Therefore, we have the following transmission
problem
\begin{equation*}\begin{cases}
\nabla\cdot(A\nabla\hG^{11})-\tau \hG^{11}=0 \quad \mbox{in} \quad
x_n>y_n\\
\nabla\cdot(A\nabla\hG^{12})-\tau \hG^{12}=0 \quad \mbox{in} \quad
y_n>x_n>0\\
\nabla\cdot(B\nabla\hG^{2})-\tau \hG^{2}=0 \quad \mbox{in} \quad
0>x_n\\
\hG^{11}-\hG^{12}=0 \quad \mbox{on} \quad x_n=y_n\\
A\nabla(\hG^{11}-\hG^{12})\cdot e_n=-e^{-\tau s}\delta(x'-y')
\quad \mbox{on} \quad x_n=y_n\\
\hG^{12}-\hG^{2}=0 \quad \mbox{on} \quad x_n=0\\
A\nabla\hG^{12}\cdot e_n-B\nabla\hG^{2}\cdot e_n=0 \quad \mbox{on}
\quad x_n=0,\end{cases}\end{equation*} where $e_n=(0,\cdots,0,1)$.
Let $\phi^{11,12,2}$ be the Fourier transforms of $\hG^{11,12,2}$
for $x'=(x_1,\cdots,x_{n-1})$. From now on, we use
$\xi'=(\xi_1,\cdots,\xi_{n-1})$ to denote the Fourier variable
associated with $x'$. Then, we have
\begin{equation}\label{linear-system}\begin{cases}\ds a_{nn}\frac{\p^2 \phi^{11}}{\p
x_n^2}+2i\Big(\sum_{j=1}^{n-1}a_{jn}\xi_j\Big)\frac{\p \phi^{11}}{\p
x_n}-(\tA\xi'\cdot\xi' + \tau) \phi^{11} = 0  \hfill\mbox{in } \{x_n > y_n\},\\
\ds a_{nn}\frac{\p^2 \phi^{12}}{\p
x_n^2}+2i\Big(\sum_{j=1}^{n-1}a_{jn}\xi_j\Big)\frac{\p \phi^{12}}{\p
x_n}-(\tA\xi'\cdot\xi' + \tau) \phi^{12} = 0  \hfill\mbox{in } \{y_n > x_n > 0\},\\
\ds b_{nn}\frac{\p^2\phi^{2}}{\p
x_n^2}+2i\Big(\sum_{j=1}^{n-1}b_{jn}\xi_j\Big)\frac{\p \phi^{2}}{\p
x_n}-(\tB\xi'\cdot\xi' + \tau) \phi^{2} = 0  \hfill \mbox{in } \{x_n < 0 \},\\
\ds \phi^{11} - \phi^{12} = 0  \hfill \mbox{on } \{x_n=y_n\},\\
\ds a_{nn}\Big(\frac{\p \phi^{11}}{\p x_n} - \frac{\p \phi^{12}}{\p
x_n}\Big) = -e^{-\tau s}e^{-i y'\cdot \xi'}  \hfill
\mbox{on } \{x_n=y_n\},\\
\ds \phi^{12} - \phi^{2} = 0 \hfill\mbox{on } \{x_n=0\},\\
\ds a_{nn}\frac{\p \phi^{12}}{\p x_n}
+i\Big(\sum_{j=1}^{n-1}a_{jn}\xi_j\Big)\phi^{12}- b_{nn}\frac{\p
\phi^{2}}{\p x_n}-i\Big(\sum_{j=1}^{n-1}b_{jn}\xi_j\Big)\phi^{2} =
0 \quad \hfill \mbox{on }\{x_n=0\},\end{cases}\end{equation} where
$\tA=(a_{ij})_{1\le i,j\le n-1},\ \tB=(b_{ij})_{1\le i,j\le n-1}$.
In addition, we put another conditions
\begin{equation}
\lim_{x_n\rightarrow \infty}\phi^{11}=0,\quad \lim_{x_n\rightarrow
-\infty}\phi^2=0.\end{equation} For simplicity of notations, let
us put
\begin{equation*}\begin{cases}
a=\sum_{j=1}^{n-1}a_{jn}\xi_j, \quad
b=\sum_{j=1}^{n-1}b_{jn}\xi_j,\\
\Theta_A=[a_{nn}(\tA\xi'\cdot\xi' + \tau)-a^2]^{1/2}, \quad
\Theta_B=[b_{nn}(\tB\xi'\cdot\xi' +
\tau)-b^2]^{1/2},\end{cases}\end{equation*} where the real parts of
$\Theta_A$ and $\Theta _B$ are positive. From the first three
differential equations in \eqref{linear-system}, we have
\begin{align*}
\phi^{11} & = C_1 \exp\Big[\frac{-ia-\Theta_A}{a_{nn}}x_n\Big],\\
\phi^{12} & = C_2 \exp\Big[\frac{-ia-\Theta_A}{a_{nn}}x_n\Big]+C_3
\exp\Big[\frac{-ia+\Theta_A}{a_{nn}}x_n\Big],\\
\phi^{2} & = C_4
\exp\Big[\frac{-ib+\Theta_B}{b_{nn}}x_n\Big].\end{align*} Conditions
on $x_n=y_n$ and $x_n=0$ imply that
\begin{equation*}\begin{cases}
C_1-C_2-C_3\exp\Big[\frac{2\Theta_A}{a_{nn}}y_n\Big]=0,\\
(C_1-C_2)(-ia-\Theta_A)-C_3(-ia+\Theta_A)\exp\Big[\frac{2\Theta_A}{a_{nn}}y_n\Big]=-e^{-\tau
s-iy'\cdot\xi'}\exp\Big[\frac{ia+\Theta_A}{a_{nn}}y_n\Big],\\
C_2+C_3-C_4=0,\\
C_2\Theta_A-C_3\Theta_A+C_4\Theta_B=0.\end{cases}\end{equation*}
Then, we have
\begin{align*}
C_1 & = \frac{1}{2\Theta_A}e^{-\tau
s-iy'\cdot\xi'}\exp\Big[\frac{ia+\Theta_A}{a_{nn}}y_n\Big]+
\frac{\Theta_A-\Theta_B}{2\Theta_A(\Theta_A+\Theta_B)}e^{-\tau
s-iy'\cdot\xi'}\exp\Big[\frac{ia-\Theta_A}{a_{nn}}y_n\Big],\\
C_2 & =
\frac{\Theta_A-\Theta_B}{2\Theta_A(\Theta_A+\Theta_B)}e^{-\tau
s-iy'\cdot\xi'}\exp\Big[\frac{ia-\Theta_A}{a_{nn}}y_n\Big],\\
C_3 & = \frac{1}{2\Theta_A}e^{-\tau
s-iy'\cdot\xi'}\exp\Big[\frac{ia-\Theta_A}{a_{nn}}y_n\Big],\\
C_4 & = \frac{1}{\Theta_A+\Theta_B}e^{-\tau
s-iy'\cdot\xi'}\exp\Big[\frac{ia-\Theta_A}{a_{nn}}y_n\Big].\end{align*}
Hence, we have
\begin{align*}
\phi^{11} & = \frac{1}{2\Theta_A}e^{-\tau
s-iy'\cdot\xi'}\exp\Big[\frac{-ia-\Theta_A}{a_{nn}}x_n+\frac{ia+\Theta_A}{a_{nn}}y_n\Big]\\
& \quad
+\frac{\Theta_A-\Theta_B}{2\Theta_A(\Theta_A+\Theta_B)}e^{-\tau
s-iy'\cdot\xi'}\exp\Big[\frac{-ia-\Theta_A}{a_{nn}}x_n+\frac{ia-\Theta_A}{a_{nn}}y_n\Big],\\
\phi^{12} & =
\frac{\Theta_A-\Theta_B}{2\Theta_A(\Theta_A+\Theta_B)}e^{-\tau
s-iy'\cdot\xi'}\exp\Big[\frac{-ia-\Theta_A}{a_{nn}}x_n+\frac{ia-\Theta_A}{a_{nn}}y_n\Big]\\
& \quad +\frac{1}{2\Theta_A}e^{-\tau s-iy'\cdot\xi'}
\exp\Big[\frac{-ia+\Theta_A}{a_{nn}}x_n+\frac{ia-\Theta_A}{a_{nn}}y_n\Big],\\
\phi^{2} & =  \frac{1}{\Theta_A+\Theta_B}e^{-\tau s-iy'\cdot\xi'}
\exp\Big[\frac{-ib+\Theta_B}{b_{nn}}x_n+\frac{ia-\Theta_A}{a_{nn}}y_n\Big].\end{align*}
Therefore, we have the
following forms for $\Ga$
\begin{align*}
\ds
\Ga^{11}(x,t;y,s)&=\frac{1}{(2\pi)^{n}i}\int_{\RR^2}e^{i(x'-y')\cdot\xi'}
\int_{\sigma-i\infty}^{\sigma+i\infty}e^{\tau
(t-s)}V^{11}(x_n,y_n,\xi',\tau)d\tau d\xi'\\
\Ga^{12}(x,t;y,s)&=\frac{1}{(2\pi)^{n}i}\int_{\RR^2}e^{i(x'-y')\cdot\xi'}
\int_{\sigma-i\infty}^{\sigma+i\infty}e^{\tau
(t-s)}V^{12}(x_n,y_n,\xi',\tau)d\tau d\xi'\\
\Ga^{2}(x,t;y,s)&=\frac{1}{(2\pi)^{n}i}\int_{\RR^2}e^{i(x'-y')\cdot\xi'}
\int_{\sigma-i\infty}^{\sigma+i\infty}e^{\tau
(t-s)}V^{2}(x_n,y_n,\xi',\tau)d\tau d\xi',\end{align*} where $\sigma
> 0$ and
\begin{align*}
V^{11}(x_n,y_n,\xi',\tau) & =\frac{1}{2\Theta_A}\exp\Big[\frac{-ia-\Theta_A}{a_{nn}}x_n+\frac{ia+\Theta_A}{a_{nn}}y_n\Big]\\
& \quad
+\frac{\Theta_A-\Theta_B}{2\Theta_A(\Theta_A+\Theta_B)}\exp\Big[\frac{-ia-\Theta_A}{a_{nn}}x_n+\frac{ia-\Theta_A}{a_{nn}}y_n\Big],\\
V^{12}(x_n,y_n,\xi',\tau) & =
\frac{\Theta_A-\Theta_B}{2\Theta_A(\Theta_A+\Theta_B)}\exp\Big[\frac{-ia-\Theta_A}{a_{nn}}x_n+\frac{ia-\Theta_A}{a_{nn}}y_n\Big]\\
& \quad +\frac{1}{2\Theta_A}
\exp\Big[\frac{-ia+\Theta_A}{a_{nn}}x_n+\frac{ia-\Theta_A}{a_{nn}}y_n\Big],\\
V^{2}(x_n,y_n,\xi',\tau) & =  \frac{1}{\Theta_A+\Theta_B}
\exp\Big[\frac{-ib+\Theta_B}{b_{nn}}x_n+\frac{ia-\Theta_A}{a_{nn}}y_n\Big].\end{align*}

Next we consider the case $y_n<0$.
We denote $\Ga$ for different regions as follows. That is
\begin{equation}\Ga=\begin{cases}\Ga^{1} \quad \mbox{for}\quad
x_n > 0,\\
\Ga^{21} \quad \mbox{for} \quad 0 > x_n > y_n,\\
\Ga^{22} \quad \mbox{for} \quad y_n >
x_n.\end{cases}\end{equation}
Let $\hG^{1,21,22}$ be the Laplace transform of $\Gamma^{1,21,22}$ with respect to $t$ and
$\phi^{1,21,22}$ be the Fourier transforms of $\hG^{1,21,22}$ with respect to
$x'=(x_1,\cdots,x_{n-1})$. Here we used the notation $\Gamma^{1,21,22}$ for example to represent one of $\Gamma^1,\,\Gamma^{21},\,\Gamma^{22}$.

Then, by a similar argument as we did for the case $y_n>0$, we have
\begin{align*}
\phi^{1} & = \frac{1}{\Theta_A+\Theta_B}e^{-\tau
s-iy'\cdot\xi'}\exp\Big[\frac{-ia-\Theta_A}{a_{nn}}x_n+\frac{ib+\Theta_B}{b_{nn}}y_n\Big],\\
\phi^{21} & =\frac{1}{2\Theta_B}e^{-\tau s-iy'\cdot\xi'}
\exp\Big[\frac{-ib-\Theta_B}{b_{nn}}x_n+\frac{ib+\Theta_B}{b_{nn}}y_n\Big]
\\
& \quad
+\frac{\Theta_B-\Theta_A}{2\Theta_B(\Theta_B+\Theta_A)}e^{-\tau
s-iy'\cdot\xi'}\exp\Big[\frac{-ib+\Theta_B}{b_{nn}}x_n+\frac{ib+\Theta_B}{b_{nn}}y_n\Big],\\
\phi^{22} & =
\frac{\Theta_B-\Theta_A}{2\Theta_B(\Theta_B+\Theta_A)}e^{-\tau
s-iy'\cdot\xi'}\exp\Big[\frac{-ib+\Theta_B}{b_{nn}}x_n+\frac{ib+\Theta_B}{b_{nn}}y_n\Big]\\
& \quad +\frac{1}{2\Theta_B}e^{-\tau s-iy'\cdot\xi'}
\exp\Big[\frac{-ib+\Theta_B}{b_{nn}}x_n+\frac{ib-\Theta_B}{b_{nn}}y_n\Big],\end{align*}
Therefore, we have the
following forms for $\Ga$
\begin{align*}
\ds
\Ga^{1}(x,t;y,s)&=\frac{1}{(2\pi)^{n}i}\int_{\RR^2}e^{i(x'-y')\cdot\xi'}
\int_{\sigma-i\infty}^{\sigma+i\infty}e^{\tau
(t-s)}V^{1}(x_n,y_n,\xi',\tau)d\tau d\xi'\\
\Ga^{21}(x,t;y,s)&=\frac{1}{(2\pi)^{n}i}\int_{\RR^2}e^{i(x'-y')\cdot\xi'}
\int_{\sigma-i\infty}^{\sigma+i\infty}e^{\tau
(t-s)}V^{21}(x_n,y_n,\xi',\tau)d\tau d\xi'\\
\Ga^{22}(x,t;y,s)&=\frac{1}{(2\pi)^{n}i}\int_{\RR^2}e^{i(x'-y')\cdot\xi'}
\int_{\sigma-i\infty}^{\sigma+i\infty}e^{\tau
(t-s)}V^{22}(x_n,y_n,\xi',\tau)d\tau d\xi',\end{align*} where
$\sigma
> 0$ and
\begin{align*}
V^{1}(x_n,y_n,\xi',\tau) & =\frac{1}{\Theta_A+\Theta_B}\exp\Big[\frac{-ia-\Theta_A}{a_{nn}}x_n+\frac{ib+\Theta_B}{b_{nn}}y_n\Big],\\
V^{21}(x_n,y_n,\xi',\tau) & = \frac{1}{2\Theta_B}
\exp\Big[\frac{-ib-\Theta_B}{b_{nn}}x_n+\frac{ib+\Theta_B}{b_{nn}}y_n\Big]\\
& \quad
+\frac{\Theta_B-\Theta_A}{2\Theta_B(\Theta_B+\Theta_A)}\exp\Big[\frac{-ib+\Theta_B}{b_{nn}}x_n+\frac{ib+\Theta_B}{b_{nn}}y_n\Big],\\
V^{22}(x_n,y_n,\xi',\tau) & =
\frac{\Theta_B-\Theta_A}{2\Theta_B(\Theta_B+\Theta_A)}\exp\Big[\frac{-ib+\Theta_B}{b_{nn}}x_n+\frac{ib+\Theta_B}{b_{nn}}y_n\Big]\\
& \quad +\frac{1}{2\Theta_B}
\exp\Big[\frac{-ib+\Theta_B}{b_{nn}}x_n+\frac{ib-\Theta_B}{b_{nn}}y_n\Big].\end{align*}

\medskip
Next we will show how to construct the Green function $G(x,t;y,s)$ from $\Gamma(x,t;y,s)$ by using the argument given in \cite{R}. For example, consider a face $x_1=-\sigma_0$ of
$\sigma_0\Omega$. For the simplicity of notations, we introduce $\tilde\Gamma(x_1,x",t;y,s)=\Gamma(x_1-\sigma_0,x",t;y_1-\sigma_0,y",s)$ with $x"=(x_2,\cdots,x_n)$. Then, $\tilde\Gamma$ solves
\begin{equation}
\left\{\begin{array}{l}
(\partial_t-\nabla\cdot\overline A\nabla)\tilde\Gamma=0\quad\mbox{\rm in}\quad{\Bbb R}^n\times(s,\infty)\\
\lim_{t\downarrow s}\int_{{\Bbb R}^n} \tilde
\Gamma(x,t;y,s)\phi(y)dy=\phi(x_1-\sigma_0,x")\,\,(\phi\in
C_0^\infty({\Bbb R}^n))
\end{array}
\right.
\end{equation}
Let us distinguish $\overline A$ here by denoting it by $\tilde A$. Now, we extend $\tilde A=(a_{ij}^+)$ in $x_1>0$ denoted by $\tilde A_+$ to $x_1<0$ as follows.
That is we define $\tilde A_-=(a_{ij}^-)$ by $a_{11}^-=a_{11}^+$, $a_{ij}^-=a_{ij}^+\,(2\le i,j\le n)$, $a_{1j}^-=-a_{1j}^+\,(2\le j\le n)$.
Then, if we define $\tilde\Gamma'(x,t;y,s)$ by $\tilde\Gamma'(x_1,x",t;y,s)=\tilde\Gamma(\pm x_1,x",t;y,s)\,(\pm x_1>0)$, then $\tilde \Gamma'$ satisfies
\begin{equation}
\left\{\begin{array}{l}
(\partial_t-\nabla\cdot\tilde A\nabla)\tilde\Gamma'=0\quad\mbox{\rm in}\quad{\Bbb R}^n\times(s,\infty)\\
\lim_{t\downarrow s}\int_{{\Bbb R}^n} \tilde
\Gamma'(x,t;y,s)\phi(y)dy=\phi(x_1-\sigma_0,x")\,\,(\phi\in
C_0^\infty(\{x_1>0\}\times{\Bbb R}^{n-1})
\end{array}
\right.
\end{equation}
and $\tilde\Gamma'(x,t;-y,s)=\tilde\Gamma'(-x_1,x",t;y,s)\,\,(t>s,\,\mbox{\rm a.e.}\,x,y\in{\Bbb R}^n)$. Hence, $\tilde\Gamma'(x,t;y,s)-\tilde\Gamma'(x,t;-y_1,y",s)$ for $x_1,\,y_1>0$ is the Green function in the domain $\{x_1>0\}$ satisfying the  Dirichlet boundary condition on $x_1=0$.
Repeating this argument for other faces of $\sigma_0\Omega$, we can construct the Green function $G(x,t;y,s)$ for $x,\,y\in\sigma_0\Omega$.
It is clear from its construction that $G(x,t;y,s)$ satisfies the estimate \eqref{eqn::esGreen} and by \eqref{symmetry}, $G^*(x,t;y,s)$
also satisfies the same estimate.

\section{Appendix B: Estimate of the Green Function}

In order to give its meaning to the fundamental solution $\Gamma(x,t;y,s)$ constructed in the previous section and estimate it, we need the following theorem.
\begin{theorem}[Lemmas 2 and 3 in \cite{Arima}]\label{Arima} For each $\rho\geq 0$,
let $g(\xi',\eta;\rho)$ be holomorphic function of $(\xi',\eta)$
in $L_{\mu}^{n-1} \subset \mathbb{C}^{n-1}\times \mathbb{C}$ for
some $\mu>0$ where
\begin{equation*}
L_{\mu}^{n-1}=\{(\xi',\eta)\in
\mathbb{C}^{n-1}\times\mathbb{C};Im\ \eta < \mu(|Re\ \eta|+|Re\
\xi'|^2)-\mu^{-1}|Im\ \xi'|^2\}.\end{equation*} We assume the following estimate for $g(\xi',\eta;\rho)$. That is, there exist some constants $C>0$ and $c>0$ such that
\begin{equation}\label{estimate of density}
|g(\xi',\eta;\rho)|\leq C(|\xi'|+|\eta|^{1/2})^l \exp(-c\rho
(|\xi'|+|\eta|^{1/2}))\exp(C|\mbox{\rm Im}\xi'|)
\end{equation} for $(\xi',\eta)\in
L_{\mu}^{n-1}$, $l<0$, and $\rho\geq 0$. Then
\begin{equation}
|G(x',t;\rho)|\leq
Ct^{-\frac{n-1}{2}-\frac{l}{2}-1}\exp\Big[-c\frac{|x'|^2+{\rho}^2}{t}\Big],
\end{equation}
where we set
\begin{equation*}
G(x',t;\rho)=(2\pi)^{-n}\int_{\RR^{n-1}}e^{ix'\cdot\xi'}\int_{-\infty-iq}^{\infty-iq}e^{it\eta}g(\xi',\eta;\rho)d\eta
d\xi'\end{equation*}
with an arbitrarily fixed positive number $q$ for
$\rho>0$ and
\begin{equation*}
G(x',t;0)\equiv \lim_{\rho\downarrow 0}G(x',t;\rho).
\end{equation*}
\end{theorem}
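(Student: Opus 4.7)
The plan is to prove the estimate by deforming the contour of integration in both $\xi'$ and $\eta$ into the tubular domain $L_\mu^{n-1}$, taking advantage of the assumed holomorphicity together with the decay factor $e^{-c\rho(|\xi'|+|\eta|^{1/2})}$. The guiding heuristic is parabolic scaling: writing $\xi' = s'/\sqrt{t}$, $\eta = \lambda/t$ suggests that the final power of $t$ should come out to $t^{-(n-1)/2-l/2-1}$, and the exponential decay in $|x'|/\sqrt{t}$ and $\rho/\sqrt{t}$ should come from a steepest-descent-type shift.

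First, I would shift the $\eta$-contour from $\{\text{Im}\,\eta = -q\}$ upward to a contour adapted to the parabolic boundary of $L_\mu^{n-1}$. Concretely, take $\eta$ on a path $\text{Im}\,\eta = \beta - \gamma|\text{Re}\,\xi'|^2$ for suitable small constants $\beta,\gamma > 0$; this is admissible by the defining inequality of $L_\mu^{n-1}$ provided the $\xi'$-contour has only modest imaginary part. The factor $|e^{it\eta}| = e^{-t\,\text{Im}\,\eta}$ will then provide part of the required decay in $t$. The argument must also check that the integrand vanishes at infinity along the connecting arcs, which follows from the assumed bound \eqref{estimate of density} and the decay factor $e^{-c\rho(|\xi'|+|\eta|^{1/2})}$.

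Second, I would shift the $\xi'$-contour from $\mathbb{R}^{n-1}$ to $\mathbb{R}^{n-1} + i\tau'$ with $\tau'$ in the direction of $x'$ and of magnitude $c_1 |x'|/t$. This introduces the factor $e^{-x'\cdot\tau'} \sim e^{-c_1 |x'|^2/t}$ from $e^{ix'\cdot\xi'}$ while keeping $(\xi',\eta)$ in $L_\mu^{n-1}$ provided $c_1$ is small. The $\rho^2/t$ part of the Gaussian is obtained by the Young-type splitting
\begin{equation*}
e^{-c\rho(|\xi'|+|\eta|^{1/2})} \leq e^{-c\rho^2/(2t)}\, e^{-(ct/2)(|\xi'|+|\eta|^{1/2})^2},
\end{equation*}
the first factor being the desired decay in $\rho$ and the second providing an integrable Gaussian weight in the remaining variables. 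The factor $e^{C|\text{Im}\,\xi'|}$ from the hypothesis is absorbed into the constant of the conclusion at the price of a smaller $c$.

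Finally, after these shifts and the splitting, parabolic rescaling of the remaining integral extracts the power $t^{-(n-1)/2-l/2-1}$ from the Jacobian of $\xi' \mapsto \xi'/\sqrt{t}$, $\eta\mapsto \eta/t$ combined with the weight $(|\xi'|+|\eta|^{1/2})^l$, while leaving a convergent integral bounded uniformly in $x',t,\rho$. The main obstacle will be verifying that the combined $\xi'$- and $\eta$-shifts stay inside $L_\mu^{n-1}$ uniformly in all parameters, and that the growth $e^{C|\text{Im}\,\xi'|}$ allowed by the hypothesis does not devour the Gaussian gain from the $\xi'$-shift; this balance constrains the final constant $c$ but does not destroy its positivity. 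A minor secondary issue is handling the limiting case $\rho\downarrow0$ defining $G(x',t;0)$, which follows from dominated convergence once the uniform bound is established.
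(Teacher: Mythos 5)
The paper does not actually prove Theorem~\ref{Arima}: it is stated as ``Lemmas 2 and 3 in \cite{Arima}'' and cited, with only a remark that the extra factor $\exp(C|\mbox{\rm Im}\,\xi'|)$ can be accommodated by a ``slight modification of the proof given in \cite{Arima}.'' So there is no proof in the paper to compare your attempt against; your argument must stand or fall on its own.

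Your overall plan — deform the $\eta$- and $\xi'$-contours into $L_\mu^{n-1}$, use $e^{it\eta}$ and $e^{ix'\cdot\xi'}$ to extract $t$- and $x'$-decay, then rescale parabolically — is indeed the right framework, and it is essentially what Arima does. But the step on which you hang the $\rho$-decay is wrong. You claim
\begin{equation*}
e^{-c\rho(|\xi'|+|\eta|^{1/2})} \;\leq\; e^{-c\rho^2/(2t)}\, e^{-(ct/2)(|\xi'|+|\eta|^{1/2})^2},
\end{equation*}
i.e.\ with $b=|\xi'|+|\eta|^{1/2}$ that $\rho b \geq \frac{\rho^2}{2t}+\frac{t b^2}{2}$. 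The weighted AM--GM inequality gives precisely the reverse, $\frac{\rho^2}{2t}+\frac{t b^2}{2}\geq \rho b$, with equality only at $b=\rho/t$; as a pointwise bound your inequality is false for every $b\neq \rho/t$. So no splitting of the assumed decay factor can, on its own, produce the Gaussian $e^{-c\rho^2/t}$.

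The $\rho^2/t$ Gaussian has to come from the contour geometry, by a saddle-type balance rather than a pointwise splitting. Concretely, because $g$ is holomorphic wherever $\mathrm{Im}\,\eta<0$ (at least for real $\xi'$), you may move the $\eta$-contour \emph{downward} to $\mathrm{Im}\,\eta = -\sigma$ with $\sigma$ chosen of size $\epsilon\,\rho^2/t^2$ for a small $\epsilon$. The factor $|e^{it\eta}|$ then grows like $e^{t\sigma}\sim e^{\epsilon\rho^2/t}$, but on that contour $|\eta|\geq \sigma$, so the hypothesis gives $e^{-c\rho|\eta|^{1/2}}\leq e^{-c\rho\sqrt{\sigma}}\sim e^{-c\sqrt{\epsilon}\,\rho^2/t}$; for $\epsilon$ small the decay wins, yielding the Gaussian in $\rho$. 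Your write-up does not contain this mechanism, and the Young-splitting as stated cannot substitute for it. The remaining issues you flag — keeping the combined shifts inside $L_\mu^{n-1}$ uniformly, and controlling $e^{C|\mathrm{Im}\,\xi'|}$ (which after the $\xi'$-shift of size $|x'|/t$ becomes $e^{C|x'|/t}$ and is not trivially dominated by $e^{-c|x'|^2/t}$ when $|x'|\lesssim 1$ and $t\to 0$) — are genuine and unresolved in the proposal, but the broken Young step is the concrete error that would have to be fixed first.
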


\begin{remark}${}$
\newline
(i)The theorem still holds even in the case the amplitude $g$ depends
on $(x_n,y_n)$.
\newline
(ii) In \cite{Arima} the factor $\exp(C|\mbox{\rm Im}\xi'|)$ does not exist in the estimate \eqref{estimate of density}. However, a slight modification of the proof given in \cite{Arima} can include the case when
the factor exists in the estimate. This was pointed out by Dr. S. Nagayasu.

\end{remark}

We will apply Theorem \ref{Arima} to estimate
$\Gamma^{11},\,\Gamma^{12},\,\Gamma^2$ in Appendix A. For this, we
have to show that the assumptions of Theorem \ref{Arima} are
satisfied in our case. To begin with we show that
$V^{11}(x_n,y_n,\xi',\eta)$, $V^{12}(x_n,y_n,\xi',\eta)$,
$V^2(x_n,y_n,\xi',\eta)$ are holomorphic in $(\xi',\eta)\in
L_\mu^{n-1}$ uniformly for $(x_n,y_n)$ with those $x_n,y_n$
satisfying the conditions attached to the definitions of
$V^{11},\,V^{12},\,V^2$. For simplicity we refer this property as
{\it uniform analyticity in $L_\mu^{n-1}$} of $V$. Let
$\gamma=(\gamma_{ij})$ be either $A$ or $B$. Then, the
characteristic equation of each equation of \eqref{linear-system}
with respect to $\lambda$ has the form in terms of $\xi_n=i\lambda$
\begin{equation*}
p_0\xi_n^2 +
p_1(\xi')\xi_n+(p_2(\xi')-\tau)=0,\end{equation*} where
$p_0=-\gamma_{nn}$,
$p_1(\xi')=-2\sum_{j=1}^{n-1}\gamma_{nj}\xi_j$,
$p_2(\xi')=-\sum_{i,j=1}^{n-1}\gamma_{ij}\xi_i\xi_j$ and
$\tau=i\eta$. Then, the roots are given as $\ds \xi_n=\frac{-p_1
\pm z^{\pm}}{2p_0}$, where $z^{\pm}=\sqrt{p_1^2-4p_0(p_2-\tau)}$
and $\pm Im\ z^{\pm}>0$. By the ellipticity, for some constant
$c'>0$, we have
\begin{equation*}
p_1^2-4p_0 p_2<-c'|\xi'|^2 \quad (\xi'\in\RR^2\setminus\{0\},\
x'\in U),\end{equation*} where $U$ is an bounded open set in which
$p_0,p_1,p_2$ are smooth.

From the construction of $V^{11},\,V^{12},\,V^2$, the uniform
analyticity of $V$ in $L_\mu^{n-1}$ easily follows from the
following lemma.

\begin{lemma}\label{analyticity}
There exists $\mu>0$ such that $p_1^2-4p_0 p_2 +4p_0 i\eta\not\in
[0,\infty)$ for $(\xi',\eta)\in L_{\mu}^{n-1}$ and $(x_n,y_n)$ with
those $x_n,y_n$ satisfying the condition attached to the definition
of $V^l$.
\end{lemma}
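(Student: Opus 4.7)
The plan is to show that if $(\xi',\eta)\in L_\mu^{n-1}$ for $\mu$ small enough, then $Q(\xi',\eta):=p_1(\xi')^2-4p_0 p_2(\xi')+4p_0 i\eta$ cannot be a non-negative real number. I write $\xi'=\alpha+i\beta$ with $\alpha,\beta\in\mathbb{R}^{n-1}$ and $\eta=\eta_R+i\eta_I$. Using that $p_1,p_2$ are real polynomials in $\xi'$, that $p_0=-\gamma_{nn}<0$, and the symmetry $\gamma_{ij}=\gamma_{ji}$, a direct expansion yields
\begin{align*}
\mathrm{Re}\,Q&=D(\alpha)-D(\beta)+4\gamma_{nn}\eta_I,\\
\mathrm{Im}\,Q&=2p_1(\alpha)p_1(\beta)+8\gamma_{nn}p_2'(\alpha,\beta)-4\gamma_{nn}\eta_R,
\end{align*}
where $D(\zeta):=p_1(\zeta)^2-4p_0 p_2(\zeta)$ is the ``real'' discriminant and $p_2'(\alpha,\beta):=-\sum_{i,j=1}^{n-1}\gamma_{ij}\alpha_i\beta_j$.

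Suppose for contradiction that $Q\in[0,\infty)$. Vanishing of $\mathrm{Im}\,Q$ solves for $\eta_R$ and yields $|\eta_R|\leq C_0|\alpha||\beta|$ with $C_0$ depending only on $\gamma$. The membership $(\xi',\eta)\in L_\mu^{n-1}$ then gives
\[
\eta_I<\mu(|\eta_R|+|\alpha|^2)-\mu^{-1}|\beta|^2\leq \mu C_0|\alpha||\beta|+\mu|\alpha|^2-\mu^{-1}|\beta|^2.
\]
By the stated ellipticity, $D(\alpha)\leq -c'|\alpha|^2$, while trivially $|D(\beta)|\leq C_1|\beta|^2$. Inserting this together with the bound on $\eta_I$ into the expression for $\mathrm{Re}\,Q$ and applying weighted AM--GM, $\mu C_0|\alpha||\beta|\leq \mu|\alpha|^2+\tfrac{\mu C_0^2}{4}|\beta|^2$, to the cross term produces
\[
\mathrm{Re}\,Q\leq(-c'+8\gamma_{nn}\mu)|\alpha|^2+\bigl(C_1+\gamma_{nn}C_0^2\mu-4\gamma_{nn}\mu^{-1}\bigr)|\beta|^2,
\]
which is strictly negative once $\mu$ is chosen small enough. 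This contradicts $\mathrm{Re}\,Q\geq 0$. Since all constants depend only on $\gamma\in\{A,B\}$, taking the smaller of the two admissible thresholds gives one $\mu$ that works uniformly for both matrices, and hence for all $(x_n,y_n)$ appearing in the conditions attached to the definitions of $V^l$ (the quantity $Q$ itself being independent of $(x_n,y_n)$ once $\gamma$ is fixed).

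The main obstacle is the $|\beta|^2$ coefficient: the term $-D(\beta)$ in $\mathrm{Re}\,Q$ is non-negative and of size $C_1|\beta|^2$, which ellipticity alone cannot make small. The crucial input is the $-\mu^{-1}|\beta|^2$ term built into the definition of $L_\mu^{n-1}$; it supplies an $O(\mu^{-1})$ negative contribution that overwhelms the $O(1)$ positive one for small $\mu$. The cross term $\mu C_0|\alpha||\beta|$ is handled by weighted AM--GM so that the resulting $O(\mu)|\alpha|^2$ is absorbed by the leading ellipticity contribution $-c'|\alpha|^2$. The degenerate edge cases $\alpha=0$ or $\beta=0$ reduce immediately to checking the sign of $4\gamma_{nn}\eta_I$ by the same mechanism, and the excluded point $\xi'=0,\ \eta=0$ does not lie in $L_\mu^{n-1}$ anyway.
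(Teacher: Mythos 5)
Your proof is correct and follows essentially the same strategy as the paper: argue by contradiction, split $Q=p_1^2-4p_0p_2+4p_0i\eta$ into real and imaginary parts, use $\operatorname{Im}Q=0$ to bound $|\eta_R|$ by $C_0|\alpha||\beta|$, insert the defining inequality of $L_\mu^{n-1}$ to obtain the crucial $-\mu^{-1}|\beta|^2$ term, and absorb cross terms to force $\operatorname{Re}Q<0$ for $\mu$ small. The only cosmetic differences are that you use a weighted AM--GM for the cross term where the paper uses a cruder $\mu M(|\xi_R'|^2+|\xi_I'|^2)$ bound and then argues $\xi'=0$, and that you note explicitly the bound already covers the degenerate cases; neither is a substantive departure from the paper's argument.
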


\bigskip

We will prove Lemma \ref{analyticity} by a contradiction argument. We first note
that
\begin{equation}\label{remind}
L_{\mu}^{n-1}\ni(\xi',\eta)\Longleftrightarrow Im\ \eta < \mu(|Re\
\eta|+|Re\ \xi'|^2)-\mu^{-1}|Im\ \xi'|^2.\end{equation} Suppose that
for $(\xi',\eta)\in L_{\mu}^{n-1},$ there is $m \geq 0$ such that
$p_1^2-4p_0 p_2 +4p_0 i\eta=m$. Put $\alpha=-\gamma_{nn}$,
$\beta=(\beta_1,\cdots,\beta_{n-1}):=-2(\gamma_{n1},\cdots,\gamma_{nn-1})$
and $\widetilde{\gamma}=(\widetilde{\ga_{ij}})_{1\leq i,j\leq
n-1}:=(-\gamma_{ij})_{1\leq i,j\leq n-1}$. Then, we have
\begin{align*}
& p_0=\a<0,\ p_1(\xi')=\beta\cdot\xi'\\
&p_2(\xi')=(\widetilde{\gamma}\xi')\cdot\xi'<0\quad
\mbox{for}\quad \xi'\in\RR^{n-1}\setminus\{0\}\\
& p_1^2=\sum_{j,k=1}^2\beta_j \beta_k \xi_j
\xi_k=(\beta\otimes\beta):(\xi'\otimes\xi')\\
& m=(\beta\otimes\beta):(\xi\otimes\xi')-4\a
(\widetilde{\gamma}\xi')\cdot\xi'+4i\a\eta.\end{align*} For simplicity, we denote
$\xi'_R=Re\ \xi'$ and $\eta_R=Re\ \eta$ etc. Then, we have
\begin{align*}
m & =
(\beta\otimes\beta):(\xi'_R\otimes\xi'_R)-(\beta\otimes\beta):(\xi'_I\otimes\xi'_I)
+2i(\beta\otimes\beta):(\xi'_R\otimes\xi'_I)\\
&\quad
-4\a\{(\widetilde{\gamma}\xi'_R)\cdot\xi'_R-(\widetilde{\gamma}\xi'_I)\cdot\xi'_I\}-8i\a(\widetilde{\gamma}\xi'_R)\cdot\xi'_I+4i\a\eta.\end{align*}
That is, we have
\begin{equation}\label{system}
\begin{cases}
(\beta\otimes\beta):(\xi'_R\otimes\xi'_R)-(\beta\otimes\beta):(\xi'_I\otimes\xi'_I)
-4\a\{(\widetilde{\gamma}\xi'_R)\cdot\xi'_R-(\widetilde{\gamma}\xi'_I)\cdot\xi'_I\}-4\a\eta_I=m\\
(\beta\otimes\beta):(\xi'_R\otimes\xi'_I)-4\a(\widetilde{\gamma}\xi'_R)\cdot\xi'_I+2\a\eta_R=0.\end{cases}\end{equation}
From the first equation, we have
\begin{equation}\label{cl}
\{-(\beta\otimes\beta):(\xi'_R\otimes\xi'_R)+4\a(\widetilde{\gamma}\xi'_R)\cdot\xi'_R\}+\{(\beta\otimes\beta):(\xi'_I\otimes\xi'_I)
-4\a(\widetilde{\gamma}\xi'_I)\cdot\xi'_I\}=-m-4\a\eta_I.
\end{equation} The left
hand side (LHS) of \eqref{cl} has the estimate $\mbox{LHS}>
c'|\xi'_R|^2-c''|\xi'_I|^2$ for some positive constants $c'$ and
$c''$. For the right hand side (RHS) of \eqref{cl}, by the
definition of $L_{\mu}^{n-1}$, we have from the second equation in
\eqref{system}
\begin{align*}
\mbox{RHS} &\leq -4\a\eta_I
<(-4\a)\{\mu(|\eta_R|+|\xi'_R|^2)-\mu^{-1}|\xi'_I|^2\}\\
&=(-4\a)\{\mu((-2\a)^{-1}|(\beta\otimes\beta):(\xi'_R\otimes\xi'_R)-4\a(\widetilde{\gamma}\xi'_R)\cdot\xi'_I|+|\xi'_R|^2)-\mu^{-1}|\xi'_I|^2\}\\
&\leq \mu
M(|\xi'_R|^2+|\xi'_I|^2)-(-4\a)\mu^{-1}|\xi'_I|^2\end{align*} for
some positive constant $M$. Thus, we have
\begin{equation*}
\tilde{c}(|\xi'_R|^2+|\xi'_I|^2)\leq \mu M
(|\xi'_R|^2+|\xi'_I|^2), \quad (\mu>0).\end{equation*} By taking
$\mu>0$ small, we have $\xi'_R=\xi'_I=0$ and hence $\eta_R=0$ by
the second equation in \eqref{system}. Then, $-4\a\eta_I=m$ gives
$\eta_I\geq 0$. This contradicts to \eqref{remind}.

Next, we show the type of estimate \eqref{estimate of density}. Let
$V$ be one of $V^l$, $l=11,12,2$ for either the case $y_n>0$ or $y_n<0$. Then, there exist positive constants $c,\,C$ such that
\begin{equation}\label{estimate of hG}
|V(x_n,y_n,\xi',\eta)|\le C
(|\xi'|+|\eta|^{1/2})^{-1}\exp\Big(-c|x_n-y_n|(|\xi'|+|\eta|^{1/2})\Big)\exp\Big(C|\mbox{\rm Im}\xi'||x_n-y_n|\Big)
\end{equation}
uniformly for $(x_n,y_n)$ with those $x_n,y_n$ satisfying the
condition attached to the definition of $V^l$. Hence, by Theorem
\ref{Arima}, there exist a constant $C>0$ such that
\begin{equation*}
|\Gamma(x,t;y,s)|\le C
(t-s)^{-\frac{n}{2}}\exp\Big(-c'\frac{|x'-y'|^2+(x_n-y_n)^2}{t-s}\Big)\,\,(t>s).\end{equation*}

For the gradient estimate of $\Gamma(x,t;y,s)$ we argue as follows. If we formally differentiate $\Gamma(x,t;y,s)$ by $\partial_{x_j}\,(1\le
j\le n-1)$, then the integrand is multiplied by $i\xi_j$. Then, multiply what we got by $x_k-y_k\,(1\le k\le n-1)$ and then integrate by parts with respect to $\xi_k$. By these procedures, we end up with an integrand which satisfies the same type of estimate as \eqref{estimate of hG}. Also, if we multiply by $x_n-y_n$ instead of multiplying by $x_j-y_j\,(1\le j\le n-1)$, we also have the same type of estimate as \eqref{estimate of hG}, because
$(x_n-y_n)(|\xi'|+|\eta|^{1/2})\exp\Big(-c|x_n-y_n|(|\xi'|+|\eta|^{1/2})$ is bounded. Hence, we have
\begin{equation*}|\nabla_{x'}\Gamma(x,t;y,s)|\le C
(t-s)^{-\frac{n+1}{2}}\exp\Big(-c'\frac{|x-y|^2}{t-s}\Big)\,\,(t>s).
\end{equation*}
We can handle the derivative $\partial_{x_n}\Gamma(x,t;y,s)$ in a similar way. Therefore, we have
\begin{equation}
|\nabla_x\Gamma(x,t;y,s)|\le C
(t-s)^{-\frac{n+1}{2}}\exp\Big(-c'\frac{|x-y|^2}{t-s}\Big)\,\,(t>s).
\end{equation}

\bigskip
\noindent
{\bf Acknowledgement} The authors thank Dr. Sei Nagayasu for the useful discussions.

\bibliographystyle{plain}

\begin{thebibliography}{99}
\bibitem{Arima} R. Arima, On general boundary value problem for
parabolic equations, J. Math. Kyoto Univ. 4-1 (1964) 207-243.

\bibitem{A} D.G. Aronson, Non-negative solutions of linear parabolic equations, Annal. Scuola Norm. Sup. Pisa, C1 Sci. 22, 607-694, 1968.
\bibitem{D-V} M. Di Cristo and S. Vessella, Stable determination of the discontinuous conductivity coefficient of a parabolic equation, 2009, preprint.
\bibitem{IKN} V. Isakov, K. Kim and G. Nakamura, Reconstruction of an unknown inclusion by thermography, 2009, preprint.

\bibitem{LRU} O. A. Lady\v{z}enskaja, V. J. Rivkind and N. N.
Uralceva, The classical solvability result for diffraction problems, Proc. Steklov Inst. Math.,98, 132--166, 1966.

\bibitem{LSU} O. A. Lady\v{z}enskaja, V. A. Solonnikov, and N. N.
Uralceva, Linear and Quasi-linear Equations of Parabolic Type,
American Mathematical Society, 23 (1968).

\bibitem{L-N} Y. Y. Li and L. Nirenberg, Estimates for elliptic systems from composite material, {\it Comm. Pure and Appl. Math.}, {\bf 56} (7), 892--925, 2003.

\bibitem{R} L. Riahi, Green function bound and parabolic potentials on a half-space, Potential Analysis, 15, 133--150, 2001.
\end{thebibliography}


\label{lastpage}

\end{document}